\documentclass[pdflatex,sn-mathphys-num]{sn-jnl}

\usepackage{graphicx}%
\usepackage{multirow}%
\usepackage{amsmath,amssymb,amsfonts}%
\usepackage{amsthm}%
\usepackage{mathrsfs}%
\usepackage[title]{appendix}%
\usepackage{xcolor}%
\usepackage{textcomp}%
\usepackage{manyfoot}%
\usepackage{booktabs}%
\usepackage{algorithm}%
\usepackage{algorithmicx}%
\usepackage{algpseudocode}%
\usepackage{listings}%

\usepackage{amssymb}
\usepackage{amsmath}
\usepackage{MnSymbol}

\usepackage[caption=false]{subfig}

\graphicspath{{./Figures/}}

\theoremstyle{thmstyleone}%
\newtheorem{theorem}{Theorem}
\theoremstyle{thmstyletwo}%

\theoremstyle{thmstylethree}%
\newtheorem{definition}{Definition}%

\begin{document}

\title[Minkowski Tensors of Planar Anisotropic Voronoi Diagrams]{Minkowski Tensors of Planar Anisotropic Voronoi Diagrams}


\author*[1]{\fnm{Nicolas} \sur{Venkovic}}\email{nicolas.venkovic@tum.de}


\author[1]{\fnm{Hartwig} \sur{Anzt}}\email{hartwig.anzt@tum.de}

\affil[1]{Chair of Computational Mathematics, School of CIT, TU Munich, Bildungscampus 2, Heilbronn, 74076, Germany}



\abstract{Anisotropic Voronoi diagrams, in which each cell is the region first reached by an ellipse growing from its site, generate non-convex cells whose integral geometry can be described using Minkowski tensors of arbitrary order. 
In this work, we derive semi-analytical formulas for Minkowski tensors of arbitrary order of the 0th, 1st and 2nd kind for planar anisotropic Voronoi cells that are star convex at their nucleation point. 
The resulting formulas involve integrals of contact functions, which describe the cell boundary through a diffeomorphic transformation. 
This transformation renders the geometry amenable to analytical treatment, enabling us to obtain expressions for the desired Minkowski tensors. 
Although most of the integrals lack closed-form solutions, they can be efficiently approximated numerically once the contact function and its derivatives are known along the boundary of the cell of interest.}

\keywords{Anisotropic Voronoi diagrams, Minkowski tensors, Integral geometry, Star-convex sets}

\maketitle
\section{Introduction}\label{sec:intro}
 
Tessellation models are commonly used to describe the cellular structure of materials, and to generate synthetic microstructures that are statistically equivalent to observed ones~\cite{Groeber2008,Jeulin2013,teferra2015tessellation}.
Calibrating such models, or inverting them to recover their generators from segmented images~\cite{Duan2014}, requires comparing descriptors of cell size, shape and orientation computed on the model with those measured on the data, for many cells and many candidate parameter sets.
Shape descriptors that are both accurate and inexpensive to evaluate are therefore a key ingredient of such workflows.
This requirement is especially challenging for microstructures with elongated grains, whose cells are anisotropic and bounded by curved interfaces.
 
A Voronoi diagram partitions an embedding space into cells, each cell being the set of points closer to a given generator, or site, than to any other site of the diagram.
Owing to this simple construction, Voronoi diagrams have become one of the fundamental data structures of computational geometry and a standard model of spatial tessellations in the natural and engineering sciences~\cite{Aurenhammer1991,okabe2009spatial}.
Many generalizations have been proposed, which differ by the nature of the sites, the embedding space and, most importantly for the present work, the distance function.
When the sites are points and the distance is Euclidean, the cells are convex polytopes.
As soon as the distance is allowed to depend on the site, however, the bisectors between neighboring sites become curved and the resulting cells are, in general, no longer convex~\cite{Boissonnat2006}.
 
Anisotropic Voronoi diagrams, introduced by Labelle and Shewchuk~\cite{labelle2003anisotropic}, equip each point site $\underline{x}_\alpha$ with its own symmetric positive-definite matrix $\mathbf{Z}_\alpha$ and measure distances with the corresponding site-dependent quadratic form.
They generalize multiplicatively weighted Voronoi diagrams, for which $\mathbf{Z}_\alpha$ is a scalar multiple of the identity.
Their bisectors are conic arcs, and their cells can be non-convex, and even disconnected~\cite{labelle2003anisotropic}.
An anisotropic Voronoi diagram admits an intuitive growth interpretation: the cell of $\underline{x}_\alpha$ is the region swept first by an ellipse centered at $\underline{x}_\alpha$, with shape and orientation prescribed by $\mathbf{Z}_\alpha$, when all ellipses start growing simultaneously and at the same rate.
This interpretation is at the root of their use as models of polycrystalline microstructures with elongated grains~\cite{teferra2015tessellation}, where they belong to a broader family of tessellations with anisotropic, curved cell boundaries that also includes generalized balanced power diagrams~\cite{Alpers2015,jung2023analytical}.
Anisotropic Voronoi diagrams also underpin anisotropic mesh generation~\cite{labelle2003anisotropic,Du2005}.
They should not be confused with Voronoi diagrams of ellipses in the Euclidean metric~\cite{emiris2006predicates,emiris2008predicates}, or with set Voronoi diagrams of aspherical particles~\cite{Schaller2013}, in which the sites themselves are extended objects.
In the context of microstructure generation, anisotropic Voronoi diagrams were resolved on a structured grid of voxels~\cite{teferra2015tessellation}, while exact algorithms for curved Voronoi diagrams rely on algebraic predicates for the bisectors~\cite{Boissonnat2006,emiris2006predicates,emiris2008predicates}, or on minimization diagrams obtained as projections of lower envelopes of distance functions~\cite{agarwal1996overlay,koltun2003partition,koltun2006overlays}.
More recently, an analytical representation of the cells of planar generalized balanced power diagrams was proposed~\cite{jung2023analytical}.
These approaches represent the cells, but do not address their morphological characterization, which has received much less attention.
 
Minkowski tensors provide a systematic and complete family of morphological descriptors.
They are tensor-valued generalizations of the Minkowski functionals (area, perimeter and Euler characteristic in the plane)~\cite{Mecke2000}, and they are, up to linear combinations, the only continuous, motion-covariant, tensor-valued valuations on convex bodies~\cite{McMullen1997,Alesker1999,Hug2008,Schneider2014}.
Their integral-geometric foundation makes them robust measures of anisotropy and orientation, and they have been used to characterize cellular, granular and porous structures, as well as spatial patterns in physics and materials science~\cite{Beisbart2002,Beisbart2006,Schroder2010,Kapfer2010,Schroder2011,Kapfer2012a,Kapfer2012b,Schroder2013,Mickel2013}.
For polygons and polyhedra, closed-form expressions are available and have been implemented efficiently, including for pixelated images~\cite{Mantz2008,Schroder2010,Schroder2013}.
Minkowski tensors can also be estimated from finite point samples~\cite{hug2017voronoi}, and density formulas are available for Boolean models~\cite{horrmann2014minkowski}.
All of these approaches, however, operate on a discretized or sampled representation of the set.
For sets bounded by curves, and in particular for cells with curvature discontinuities at their vertices, the cell is thus usually approximated by a polygon or a voxel set before its Minkowski tensors are evaluated.
This introduces discretization errors that are difficult to control, especially for tensors of high order and for tensors of the 2nd kind, for which the curvature of the approximating polygon is concentrated at its vertices.
 
In this work, we derive semi-analytical formulas for Minkowski tensors of arbitrary order of the 0th, 1st and 2nd kind for cells of planar anisotropic Voronoi diagrams that are star convex with respect to their site.
Our approach builds on a diffeomorphic transformation of the cell boundary into a cylinder $S^1\times(0,\infty)$, under which each visible segment of the boundary is described by an explicit contact function $\chi$ of a single angular variable.
Following the growth interpretation, the contact function gives the time at which two growing ellipses come into contact, and is thereby related to the distance of closest approach of two ellipses~\cite{Zheng2007}.
In terms of $\chi$ and its first two derivatives, we express the normal and tangent vectors, the length element and the curvature of the cell boundary in closed form.
Using these expressions, every Minkowski tensor is written as a finite sum of symmetric tensor products of the site position $\underline{x}_\alpha$ and of the principal directions $\underline{u}^\alpha_1$ and $\underline{u}^\alpha_2$ of $\mathbf{Z}_\alpha$, weighted by scalar one-dimensional integrals of the contact function.
The tensor structure is thus separated from the geometry: a single set of scalar integrals, which can be approximated by standard quadrature to arbitrary accuracy, serves all tensor orders, and no discretization of the cell is needed.
The contributions of the vertices of the cell boundary to the tensors of the 2nd kind are obtained in closed form through a parallel body construction~\cite{Schroder2010}.
 
The remainder of the paper is organized as follows.
Section~\ref{sec:avd} recalls the definition of anisotropic Voronoi diagrams.
Section~\ref{sec:diffeo} introduces the diffeomorphic transformation of cell boundaries and the contact function.
Section~\ref{sec:local} derives the local geometry of the cell boundary: normal and tangent vectors, surface and length elements, and curvature.
Section~\ref{sec:mt} presents the formulas for the Minkowski tensors of the 0th, 1st and 2nd kind.
Section~\ref{sec:comp} illustrates their computation on a model cell by means of polar glyphs of tensors up to order nine, and Section~\ref{conclusion} concludes the paper.

\section{Anisotropic planar Voronoi diagrams}\label{sec:avd}

Let $\mathcal{X}$ be a non-empty set equipped with a distance function $d:\mathcal{X}\times\mathcal{X}\rightarrow [0,\infty)$.
In its most general form, a Voronoi diagram is defined on $\mathcal{X}$ for any indexed collection $\{\mathcal{X}_\alpha\}_{\alpha\in I}$ of non-empty subsets $\mathcal{X}_\alpha$ of $\mathcal{X}$, referred to as sites, with index set $I\subseteq\mathbb{N}$.
The Voronoi cell $\mathcal{C}_\alpha\subseteq\mathcal{X}$ associated with a site $\mathcal{X}_\alpha$ is the set of elements of $\mathcal{X}$ whose distance to $\mathcal{X}_\alpha$ is not greater than their distance to any other site.
That is, upon letting the distance between a point $x\in\mathcal{X}$ and a subset $\mathcal{Y}\subseteq\mathcal{X}$ be given by $d(x,\mathcal{Y})=\inf\{d(x,y):\,y\in \mathcal{Y}\}$, the Voronoi cell associated with $\mathcal{X}_\alpha$ is given by
\begin{align}
\mathcal{C}_\alpha:=\{x\in \mathcal{X}:\,d(x,\mathcal{X}_\alpha)\leq d(x,\mathcal{X}_\gamma)\;\forall\,\gamma\in I\}.
\end{align}
Then, the Voronoi diagram is the indexed collection $\{\mathcal{C}_\alpha\}_{\alpha\in I}$ of Voronoi cells.
Under mild conditions on the sites and on the distance, these cells satisfy
\begin{align}
\operatorname{int}\mathcal{C}_\alpha\cap\operatorname{int}\mathcal{C}_\gamma=\emptyset
\;\forall\;\alpha,\gamma\in I\,\text{ s.t. }\,\alpha\neq \gamma
\;\text{ and }\;
\bigcup_{\alpha\in I}\mathcal{C}_\alpha=\mathcal{X},
\end{align}
where $\operatorname{int}$ denotes the interior of a set.
That is, the Voronoi diagram $\{\mathcal{C}_\alpha\}_{\alpha\in I}$ forms a tessellation of $\mathcal{X}$.
Such a general definition accounts for a broad variety of Voronoi diagrams.
Most significantly, Voronoi diagrams differ in their types of:
\begin{itemize}
\item Sites: while the most common practice is to let each site $\mathcal{X}_\alpha$ be a single element of $\mathcal{X}$, i.e., a point, the above definition allows for more general settings.
In particular, Voronoi diagrams have previously been defined where the sites were straight line segments~\cite{kirkpatrick1979efficient}, curved line segments~\cite{yap1987n}, discs and other convex areas~\cite{leven1987planning}, and more.
\item Embedding space: while the most common practice is to consider $\mathcal{X}$ as a Euclidean subspace of $\mathbb{R}^2$ or $\mathbb{R}^3$, generalized Voronoi diagrams have previously been defined on Riemannian manifolds~\cite{ehrlich1979dirichlet}, spheres and tori~\cite{brown1980,paschinger1982,yap1987n}, polyhedral surfaces~\cite{mount1985voronoi}, cones~\cite{dehne1987optimal}, and more.
\item Distance: while the great majority of Voronoi diagrams are based on Euclidean distances, many other distances have previously been used to define generalized Voronoi diagrams.
These include $p$-norms~\cite{lee1980two}, Hausdorff distances~\cite{aurenhammer1988voronoi}, convex distance functions~\cite{chew1985voronoi}, and more.
\end{itemize}
For a broad overview of generalized Voronoi diagrams, we refer the reader to chapter~3 in~\cite{okabe2009spatial}.
 
For the purpose of this work, as in most applications, we consider Voronoi diagrams of point sets.
Moreover, we restrict ourselves to planar Voronoi diagrams, that is, $\mathcal{X}\subseteq\mathbb{R}^2$.
In this context, we consider collections of sites of the form $\{\underline{x}_\alpha\}_{\alpha\in I}$, where $\underline{x}_\alpha\in\mathbb{R}^2$.
Of particular interest for the introduction of anisotropic Voronoi diagrams are multiplicatively weighted distances,
that is, distances of the form $d(\underline{x},\underline{x}_\alpha)=\|\underline{x}-\underline{x}_\alpha\|/w_{\alpha}$, where $w_\alpha$ is a site-dependent positive scalar, and $\|\cdot\|$ denotes the 2-norm.
Anisotropic Voronoi diagrams, which were introduced in~\cite{labelle2003anisotropic}, can be seen as generalizations of multiplicatively weighted Voronoi diagrams.
In $\mathbb{R}^2$, an anisotropic Voronoi diagram is defined by a marked point set $\{(\underline{x}_\alpha,\mathbf{Z}_\alpha)\}_{\alpha\in I}$, where each mark $\mathbf{Z}_\alpha\in\mathbb{R}^{2\times 2}$ is a symmetric positive-definite (SPD) matrix, leading to the distance defined by
\begin{align}
d(\underline{x},\underline{x}_\alpha)
:=
\sqrt{(\underline{x}-\underline{x}_\alpha)\cdot \mathbf{Z}_\alpha\cdot (\underline{x}-\underline{x}_\alpha)}
\;\forall\;\alpha\in I.
\end{align}
Strictly speaking, since it depends on the site, this distance is not a metric on $\mathbb{R}^2$ but a family of site-dependent distance functions, which is all that the definition of Voronoi cells requires.
Hereafter, we use the notation $d_\alpha(\underline{x}):=d(\underline{x},\underline{x}_\alpha)$.
Since $\mathbf{Z}_\alpha$ is SPD, there exist $v_1^\alpha\geq v_2^\alpha>0$ and orthonormal vectors $\underline{u}_1^\alpha,\underline{u}_2^\alpha\in\mathbb{R}^2$ such that
\begin{align}
\mathbf{Z}_\alpha
=
\frac{\underline{u}_1^\alpha\otimes\underline{u}_1^\alpha}{(v_1^{\alpha})^2}+
\frac{\underline{u}_2^\alpha\otimes\underline{u}_2^\alpha}{(v_2^{\alpha})^2},
\end{align}
where $\otimes$ denotes the tensor product.
Without loss of generality, we further assume $\underline{u}_1^\alpha\times\underline{u}_2^\alpha=\underline{e}_1\times\underline{e}_2$, i.e., $(\underline{u}_1^\alpha,\underline{u}_2^\alpha)$ is positively oriented, where $\underline{e}_1$ and $\underline{e}_2$ denote the standard basis vectors of $\mathbb{R}^2$.
Consequently, for any $t>0$, the level set 
\begin{align}
S_\alpha(t):=\{\underline{x}\in\mathbb{R}^2,\,d_\alpha(\underline{x})=t\}
\end{align}
is an ellipse centered at $\underline{x}_\alpha$, with semi-axes of lengths $t\,v_1^\alpha$ and $t\,v_2^\alpha$ along $\underline{u}_1^\alpha$ and $\underline{u}_2^\alpha$, respectively.
The cell $\mathcal{C}_\alpha$ is thus the region reached first by the ellipse growing from $\underline{x}_\alpha$ when all ellipses start growing simultaneously and at the same rate.

\section{Diffeomorphic transformation of anisotropic Voronoi cell boundaries}\label{sec:diffeo}
 
As we will see in the following sections, the boundary of a Voronoi cell carries most of the information about the morphology of the cell that is needed to evaluate Minkowski tensors.
Therefore, we first describe the boundary of an anisotropic Voronoi cell, and we then introduce star convexity as the property that allows us to parameterize the boundary of a non-convex cell explicitly.
 
\begin{definition}[Common curves and cell boundary]\label{def:boundary}
For every $\gamma\neq\alpha$ such that $\mathcal{C}_\alpha\cap\,\mathcal{C}_\gamma\neq\emptyset$, the common curve between the cells $\mathcal{C}_\alpha$ and $\mathcal{C}_\gamma$ is
\begin{align}
\mathcal{I}_{\alpha\gamma}:=\mathcal{C}_\alpha\cap\mathcal{C}_\gamma\subseteq\{\underline{x}\in\mathbb{R}^2,\,d_\alpha(\underline{x})=d_\gamma(\underline{x})\}.
\end{align}
The boundary of $\mathcal{C}_\alpha$ is the union $\mathcal{I}_\alpha:=\partial\mathcal{C}_\alpha=\bigcup_{\gamma}\mathcal{I}_{\alpha\gamma}$ of its common curves.
The points of $\mathcal{I}_\alpha$ that belong to more than one common curve are referred to as vertices, or common points of the cell.
\end{definition}
 
Each common curve $\mathcal{I}_{\alpha\gamma}$ is contained in the zero level set of the function
\begin{align}\label{eq:040}
f^{\alpha}_{\gamma}(\underline{y}):=d_\alpha(\underline{y})^2-d_\gamma(\underline{y})^2,
\end{align}
which is a conic.
The common curve is a regular curve provided that
\begin{equation}\label{eq:050}
\nabla f^{\alpha}_{\gamma}(\underline{y})
=2\left[\mathbf{Z}_\alpha\cdot(\underline{y}-\underline{x}_\alpha)-\mathbf{Z}_\gamma\cdot(\underline{y}-\underline{x}_\gamma)\right]
\neq\underline{0}
\;\;\forall\;\underline{y}\in\mathcal{I}_{\alpha\gamma}.
\end{equation}
Regularity is important, as it guarantees the existence of the local geometric quantities, such as normals and curvatures, used in this work to characterize the morphology of the cells.
The boundary $\mathcal{I}_\alpha$ is then a piecewise regular closed curve, whose normal and curvature are generally discontinuous at the common points.
 
\begin{definition}[Star-convex Voronoi cell]\label{def:star}
A Voronoi cell $\mathcal{C}_\alpha$ is said to be star convex at a point $\underline{y}\in\mathcal{C}_\alpha$ if, for every $\underline{x}\in\mathcal{C}_\alpha$, the line segment between $\underline{y}$ and $\underline{x}$ is contained in $\mathcal{C}_\alpha$.
That is, $\mathcal{C}_\alpha$ is star convex at $\underline{y}$ if and only if
\begin{align}
d_\alpha((1-s)\underline{y}+s\underline{x})\leq d_\gamma((1-s)\underline{y}+s\underline{x})
\;\;\forall\;
(\underline{x},\gamma,s)\in \mathcal{C}_\alpha\times I\times[0,1].
\end{align}
In what follows, we consider star convexity at the nucleation point $\underline{x}_\alpha$.
\end{definition}
 
The implicit representation of $\mathcal{I}_{\alpha\gamma}$ as part of the zero level set of Eq.~\eqref{eq:040} is the source of two difficulties.
First, it does not tell which part of the conic actually belongs to the cell boundary.
Second, it does not provide the coordinates of the points of $\mathcal{I}_{\alpha\gamma}$, so that computing integrals over the common curve, which is needed to evaluate Minkowski tensors, is not straightforward.
To circumvent these difficulties, we introduce the map
\begin{equation}\label{eq:020}
\begin{split}
\varphi_\alpha:S^1\times(0,\infty)&\,\rightarrow \mathbb{R}^2\setminus\{\underline{x}_\alpha\}\\
(\underline{s},t)&\,\mapsto\underline{x}_\alpha+t\,\mathbf{Z}_{\alpha}^{-1/2}\cdot\underline{s},
\end{split}
\end{equation}
where $S^1$ denotes the unit circle centered at the origin of $\mathbb{R}^2$.
Since $d_\alpha(\varphi_\alpha(\underline{s},t))=t$, the map $\underline{s}\mapsto\varphi_\alpha(\underline{s},t)$ traces the ellipse $S_\alpha(t)$, and the map $t\mapsto\varphi_\alpha(\underline{s},t)$ traces a ray emanating from $\underline{x}_\alpha$.
The map $\varphi_\alpha$ is a smooth bijection, whose inverse is given by
\begin{equation}\label{eq:030}
\begin{split}
\varphi_\alpha^{-1}:\mathbb{R}^2\setminus\{\underline{x}_\alpha\}&\,\rightarrow S^1\times(0,\infty)\\
\underline{y}&\,\mapsto
\left(
\frac{\mathbf{Z}_\alpha^{1/2}\cdot(\underline{y}-\underline{x}_{\alpha})}{\|\mathbf{Z}_\alpha^{1/2}\cdot(\underline{y}-\underline{x}_{\alpha})\|},\,
\|\mathbf{Z}_\alpha^{1/2}\cdot(\underline{y}-\underline{x}_{\alpha})\|
\right).
\end{split}
\end{equation}
Since $\varphi_\alpha^{-1}$ is also smooth, $\varphi_\alpha$ is a diffeomorphism.
Note that the second component of $\varphi_\alpha^{-1}(\underline{y})$ is $d_\alpha(\underline{y})$.
 
Rather than parameterizing $\mathcal{I}_{\alpha\gamma}$ directly, we study its image $\varphi^{-1}_{\alpha}(\mathcal{I}_{\alpha\gamma})$, which, since $\varphi_\alpha$ is a diffeomorphism, is a regular curve whenever $\mathcal{I}_{\alpha\gamma}$ is.
As shown in Theorem~\ref{theo:diffeo}, this image admits an explicit one-dimensional parameterization $\phi^{\alpha}_{\gamma}:(\theta_a,\theta_b)\rightarrow\varphi^{-1}_{\alpha}(\mathcal{I}_{\alpha\gamma})$ for segments of $\mathcal{I}_{\alpha\gamma}$ that are visible from $\underline{x}_\alpha$, from which an explicit representation $\varphi_{\alpha}\circ\phi^{\alpha}_{\gamma}:(\theta_a,\theta_b)\rightarrow\mathcal{I}_{\alpha\gamma}$ of the common curve follows.
This approach is summarized in Fig.~\ref{fig.1000}.
 
\begin{figure}
\centering
\includegraphics[clip,scale=.6]{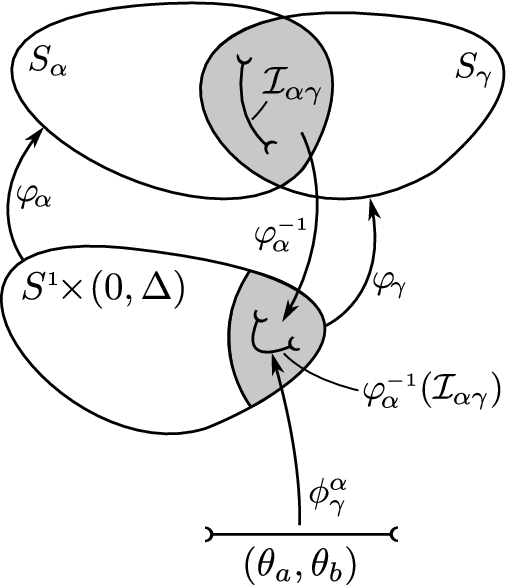}
\caption{Transformation and local parameterization of common curves $\mathcal{I}_{\alpha\gamma}$.}
\label{fig.1000}
\end{figure}
 
\begin{definition}[Visible boundary segment]\label{def:visible}
An open, connected subset $\delta\mathcal{I}\subseteq\mathcal{I}_{\alpha\gamma}$ that contains no vertex is said to be visible from $\underline{x}_\alpha$ if, for every $\underline{y}\in\delta\mathcal{I}$, the half-open segment $[\underline{x}_\alpha,\underline{y})$ is contained in the interior of $\mathcal{C}_\alpha$, and the ray from $\underline{x}_\alpha$ through $\underline{y}$ is not tangent to $\mathcal{I}_{\alpha\gamma}$ at $\underline{y}$.
\end{definition}
 
If $\mathcal{C}_\alpha$ is star convex at $\underline{x}_\alpha$ and no ray from $\underline{x}_\alpha$ is tangent to $\mathcal{I}_\alpha$, then every connected component of $\mathcal{I}_{\alpha\gamma}$ without its end points is visible from $\underline{x}_\alpha$.
 
\begin{theorem}[Diffeomorphic transformation of visible boundary segments]\label{theo:diffeo}
Let $\delta \mathcal{I}\subseteq \mathcal{I}_{\alpha\gamma}$ be a boundary segment of an anisotropic Voronoi cell $\mathcal{C}_\alpha$ that is visible from $\underline{x}_\alpha$, and let $\delta S^1:=\{\underline{s}\in S^1,\,\exists\,t>0,\;\varphi_\alpha(\underline{s},t)\in\delta\mathcal{I}\}$.
Then, $\delta S^1$ is an open arc of the unit circle, and $\varphi_\alpha^{-1}$ maps $\delta\mathcal{I}$ diffeomorphically onto the graph $\{(\underline{s},\chi_{\alpha\gamma}(\underline{s})):\,\underline{s}\in\delta S^1\}$ of the smooth contact function $\chi_{\alpha\gamma}:\delta S^1\rightarrow(0,\infty)$ given by
\begin{align}\label{eq:contact}
\chi_{\alpha\gamma}(\underline{s}):=
\frac{c}{b(\underline{s})+\sqrt{b(\underline{s})^2-a(\underline{s})\,c}}
\;\;\forall\;
\underline{s}\in\delta S^1,
\end{align}
where
\begin{align}
a(\underline{s}):=\underline{s}\cdot\mathbf{Z}_\gamma^\alpha\cdot\underline{s}-1,\quad
b(\underline{s}):=\underline{s}\cdot\mathbf{Z}_\gamma^\alpha\cdot\underline{x}_\gamma^\alpha,\quad
c:=\underline{x}_\gamma^\alpha\cdot\mathbf{Z}_\gamma^\alpha\cdot\underline{x}_\gamma^\alpha,
\end{align}
with $\underline{x}_\gamma^\alpha:=\mathbf{Z}_\alpha^{1/2}\cdot(\underline{x}_\gamma-\underline{x}_\alpha)$ and $\mathbf{Z}_\gamma^\alpha:=\mathbf{Z}_\alpha^{-1/2}\cdot\mathbf{Z}_\gamma\cdot\mathbf{Z}_\alpha^{-1/2}$.
In particular,
\begin{align*}
\delta \mathcal{I}=\{\varphi_\alpha(\underline{s},\chi_{\alpha\gamma}(\underline{s})):\,\underline{s}\in\delta S^1\}.
\end{align*}
\end{theorem}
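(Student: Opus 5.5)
The plan is to pull the bisector condition back through $\varphi_\alpha$ and solve it explicitly for $t$. For $\underline{y}=\varphi_\alpha(\underline{s},t)$ we have $d_\alpha(\underline{y})=t$. Writing $\underline{y}-\underline{x}_\gamma=\mathbf{Z}_\alpha^{-1/2}\cdot(t\underline{s}-\underline{x}_\gamma^\alpha)$ gives
\begin{align*}
d_\gamma(\underline{y})^2=(t\underline{s}-\underline{x}_\gamma^\alpha)\cdot\mathbf{Z}_\gamma^\alpha\cdot(t\underline{s}-\underline{x}_\gamma^\alpha).
\end{align*}
Hence $f^\alpha_\gamma(\varphi_\alpha(\underline{s},t))=-\left[a(\underline{s})t^2-2b(\underline{s})t+c\right]$, and the pulled-back bisector is the zero set of the quadratic $q_{\underline{s}}(t):=a(\underline{s})t^2-2b(\underline{s})t+c$. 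Since $\mathbf{Z}_\gamma^\alpha$ is SPD and $\underline{x}_\gamma\neq\underline{x}_\alpha$, we have $c>0$. Also $q_{\underline{s}}(0)=c>0$, which encodes $d_\gamma>d_\alpha$ near $\underline{x}_\alpha$.

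Next I will use visibility to pick out the correct root. Fix $\underline{s}\in\delta S^1$ and a $t^*>0$ with $\varphi_\alpha(\underline{s},t^*)\in\delta\mathcal{I}$. The segment $[\underline{x}_\alpha,\underline{y})$ lies in $\operatorname{int}\mathcal{C}_\alpha$, so $q_{\underline{s}}(t)\geq 0$ on $[0,t^*)$ with $q_{\underline{s}}(t^*)=0$. By continuity, $q_{\underline{s}}$ does not vanish before $t^*$: an interior zero would force a double root or a sign change, and a sign change is excluded because the segment stays in $\mathcal{C}_\alpha$. So $t^*$ is the smallest positive root. Non-tangency of the ray gives $\partial_t q_{\underline{s}}(t^*)\neq 0$, because $\partial_t q$ is proportional to $\nabla f^\alpha_\gamma\cdot\mathbf{Z}_\alpha^{-1/2}\underline{s}$, the derivative along the ray direction. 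Therefore $t^*$ is a simple root, the discriminant satisfies $b^2-ac>0$, and $q_{\underline{s}}'(t^*)<0$. The smallest positive simple root with negative slope of a quadratic with $q(0)>0$ is $t^*=(b-\sqrt{b^2-ac})/a$ when $a\neq 0$, and $t^*=c/(2b)$ with $b>0$ when $a=0$. Rationalizing gives the single formula $t^*=c/(b+\sqrt{b^2-ac})$ in all cases. We still need $b+\sqrt{b^2-ac}>0$. If $a>0$, the roots of the quadratic have positive product, and a positive smallest root forces $b>0$. If $a<0$, then $\sqrt{b^2-ac}>|b|$. In particular $t^*$ is unique for each $\underline{s}$, so $\varphi_\alpha^{-1}(\delta\mathcal{I})$ is the graph of $\chi_{\alpha\gamma}$ over $\delta S^1$.

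Then I will establish smoothness and the arc property. On the set where $b^2-ac>0$ and $b+\sqrt{b^2-ac}>0$, $\chi_{\alpha\gamma}$ is a composition of smooth functions of $\underline{s}$, hence smooth. The map $\underline{y}\mapsto\underline{s}$, the first component of $\varphi_\alpha^{-1}$, is smooth on $\delta\mathcal{I}$. It is injective by the uniqueness just shown. Its differential along the curve is nonzero, because the tangent is not radial (non-tangency). So it is a local diffeomorphism from the open connected curve $\delta\mathcal{I}$ into $S^1$. Its image $\delta S^1$ is therefore open and connected, i.e.\ an open arc. It cannot be all of $S^1$, since a compact closed curve would contain vertices or not be the image of an open segment; if needed this case can simply be included as a degenerate arc. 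The inverse is $\underline{s}\mapsto\varphi_\alpha(\underline{s},\chi_{\alpha\gamma}(\underline{s}))$, which is smooth, giving the diffeomorphism onto the graph and the final identity.

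The main obstacle is the root-selection step: justifying rigorously that visibility together with non-tangency forces $t^*$ to be the smallest positive, simple root. It must also be checked that the rationalized formula has a positive denominator uniformly across the sign cases of $a$. I will handle this by the case split on the sign of $a$ above, using $q(0)=c>0$ and $q'(t^*)<0$.
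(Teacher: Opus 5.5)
Your proposal is correct and follows essentially the same route as the paper. You pull back to $g(\underline{s},t)=a(\underline{s})t^2-2b(\underline{s})t+c$ with $c>0$, use visibility and non-tangency to single out the smallest positive simple root, and conclude via injectivity of $\underline{y}\mapsto\underline{s}$ and smoothness of the explicit formula; you also spell out the sign discussion on $a(\underline{s})$ and the openness of $\delta S^1$ (via the local-diffeomorphism argument), where the paper only asserts them.

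Two small touch-ups are needed. First, a zero of $g(\underline{s},\cdot)$ in $(0,t^*)$ that is a double root is excluded simply because a nonzero polynomial of degree at most two cannot have a double root and a further root at $t^*$. Second, $\delta S^1=S^1$ can genuinely occur, for example for the bounded Apollonius disk of a two-site multiplicatively weighted diagram, whose whole boundary is a single vertex-free common curve. So your fallback of counting $S^1$ as a degenerate arc is the right fix, not your claim that this case is impossible (the paper's proof glosses over the same case).
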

 
\begin{proof}
Let $\underline{s}\in S^1$ and $t>0$, and let $\underline{y}=\varphi_\alpha(\underline{s},t)$.
Then $d_\alpha(\underline{y})=t$ and $\underline{y}-\underline{x}_\gamma=\mathbf{Z}_\alpha^{-1/2}\cdot(t\,\underline{s}-\underline{x}_\gamma^\alpha)$, so that
\begin{align}
d_\gamma(\underline{y})^2-d_\alpha(\underline{y})^2
=a(\underline{s})\,t^2-2\,b(\underline{s})\,t+c
=:g(\underline{s},t).
\end{align}
Since $\mathbf{Z}_\gamma^\alpha$ is SPD and $\underline{x}_\gamma\neq\underline{x}_\alpha$, we have $g(\underline{s},0)=c>0$.
By Definition~\ref{def:visible}, for every $\underline{y}\in\delta\mathcal{I}$ with $\varphi_\alpha^{-1}(\underline{y})=(\underline{s},\tau)$, $\tau$ is the smallest positive root of $g(\underline{s},\cdot)$, and this root is simple, since the ray is not tangent to $\mathcal{I}_{\alpha\gamma}$ at $\underline{y}$.
A simple root requires $b(\underline{s})^2-a(\underline{s})\,c>0$, and an elementary discussion of the signs of $a(\underline{s})$ and $b(\underline{s})$ shows that the smallest positive root is given by Eq.~\eqref{eq:contact}, so that $\tau=\chi_{\alpha\gamma}(\underline{s})$.
Moreover, since a ray meets $\delta\mathcal{I}$ only at the point where it first leaves $\mathcal{C}_\alpha$, the map $\delta\mathcal{I}\ni\underline{y}\mapsto\underline{s}$ is injective, and its image $\delta S^1$ is connected and open, i.e., an open arc.
Finally, $\partial_t g(\underline{s},\chi_{\alpha\gamma}(\underline{s}))=-2\sqrt{b(\underline{s})^2-a(\underline{s})\,c}\neq 0$, so that, by the implicit function theorem, $\chi_{\alpha\gamma}$ is smooth on $\delta S^1$.
Hence, $\varphi_\alpha^{-1}$, which is a diffeomorphism, maps $\delta\mathcal{I}$ onto the graph of the smooth function $\chi_{\alpha\gamma}$, which is diffeomorphic to $\delta S^1$.
\end{proof}
 
The contact function owes its name to the growth interpretation of anisotropic Voronoi diagrams, that is, $\chi_{\alpha\gamma}(\underline{s})$ is the time at which the ellipse growing from $\underline{x}_\alpha$, in the direction $\mathbf{Z}_\alpha^{-1/2}\cdot\underline{s}$, comes into contact with the ellipse growing from $\underline{x}_\gamma$.
Alternatively, an auxiliary problem formulation consists of evaluating the time at which the ellipse associated with the SPD matrix $\mathbf{Z}_\gamma^\alpha$ moving towards the origin comes into contact with the unit circle.
This auxiliary problem is illustrated in Fig.~\ref{fig.1005}.
If $\mathcal{C}_\alpha$ is star convex at $\underline{x}_\alpha$, each ray from $\underline{x}_\alpha$ leaves the cell exactly once, so that the contact function of the cell is
\begin{align}
\chi(\underline{s}):=\min_{\gamma}\chi_{\alpha\gamma}(\underline{s})\;\;\forall\;\underline{s}\in S^1,
\end{align}
where the minimum is taken over the neighbors $\gamma$ of $\mathcal{C}_\alpha$, and $\chi_{\alpha\gamma}(\underline{s}):=\infty$ whenever the ray in the direction $\underline{s}$ does not meet $\mathcal{I}_{\alpha\gamma}$.
The function $\chi$ is continuous on $S^1$, and smooth except at the directions $\underline{s}$ pointing to vertices of the cell.
Evaluating $\chi$ thus requires knowing the neighbors of $\mathcal{C}_\alpha$ and the arcs $\delta S^1$ over which each of them determines the cell boundary.
 
\begin{figure}
\centering
\includegraphics[clip,scale=.325]{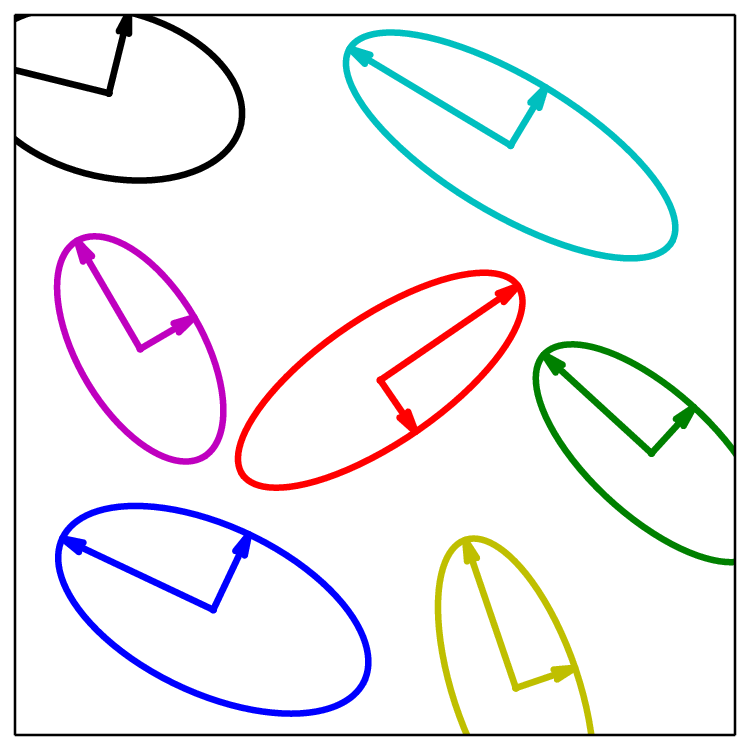}\includegraphics[clip,scale=.325]{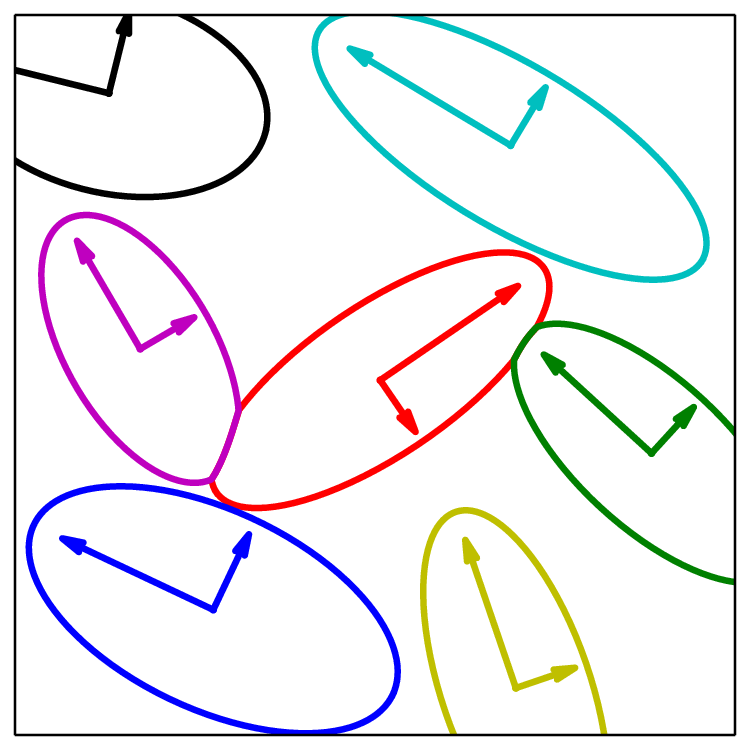}\includegraphics[clip,scale=.325]{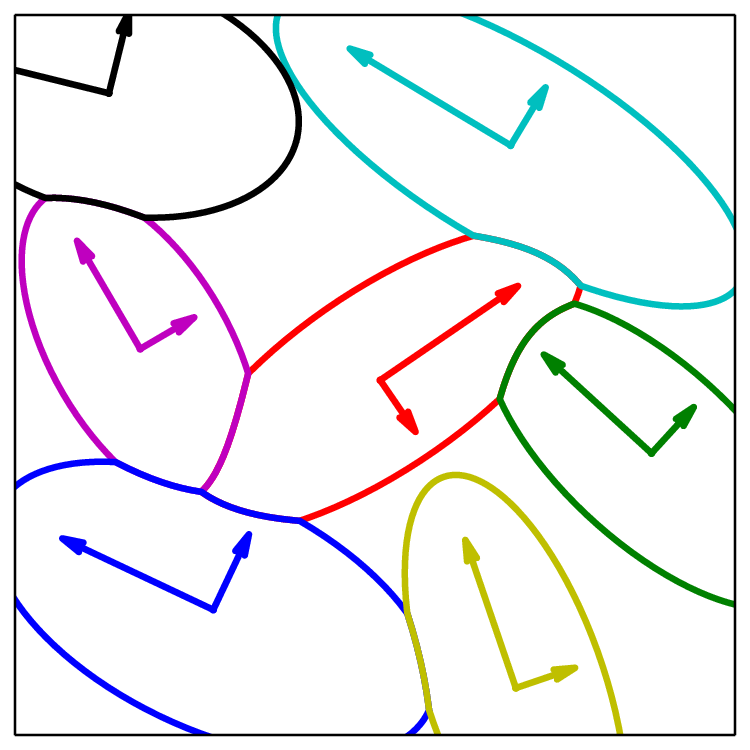}\\
\includegraphics[clip,scale=.3]{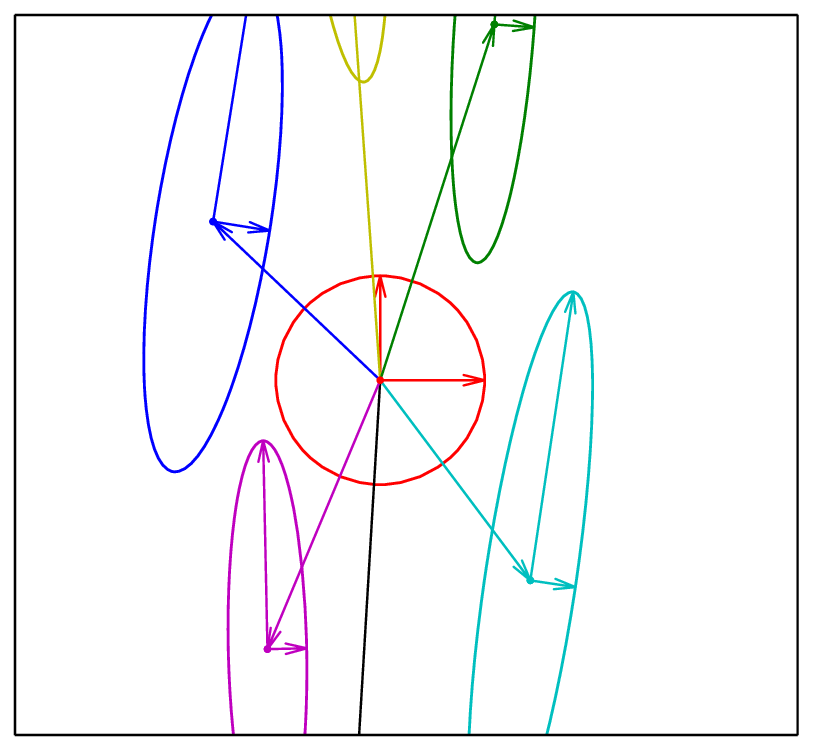}\includegraphics[clip,scale=.3]{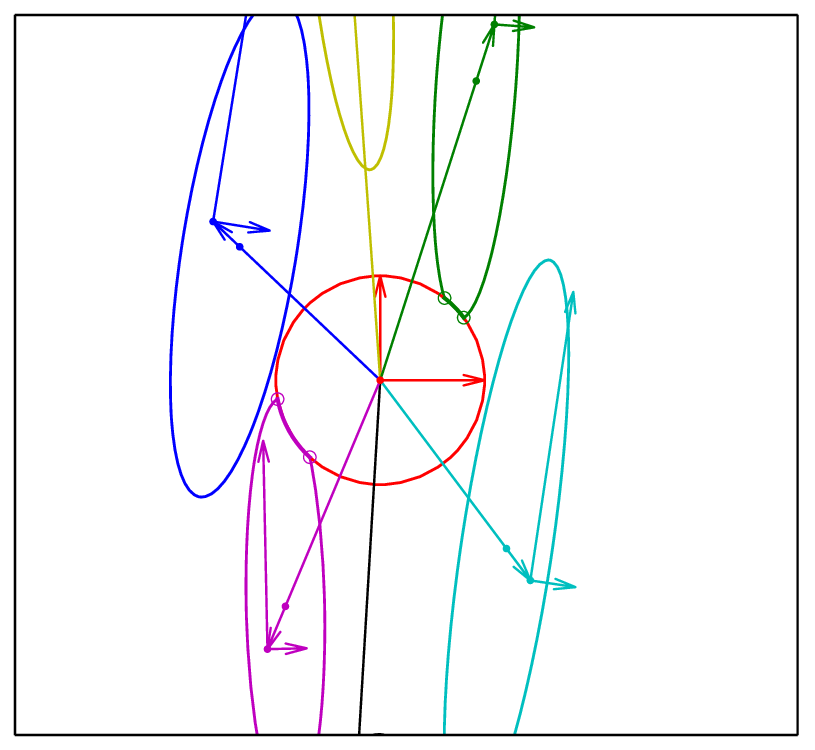}\includegraphics[clip,scale=.3]{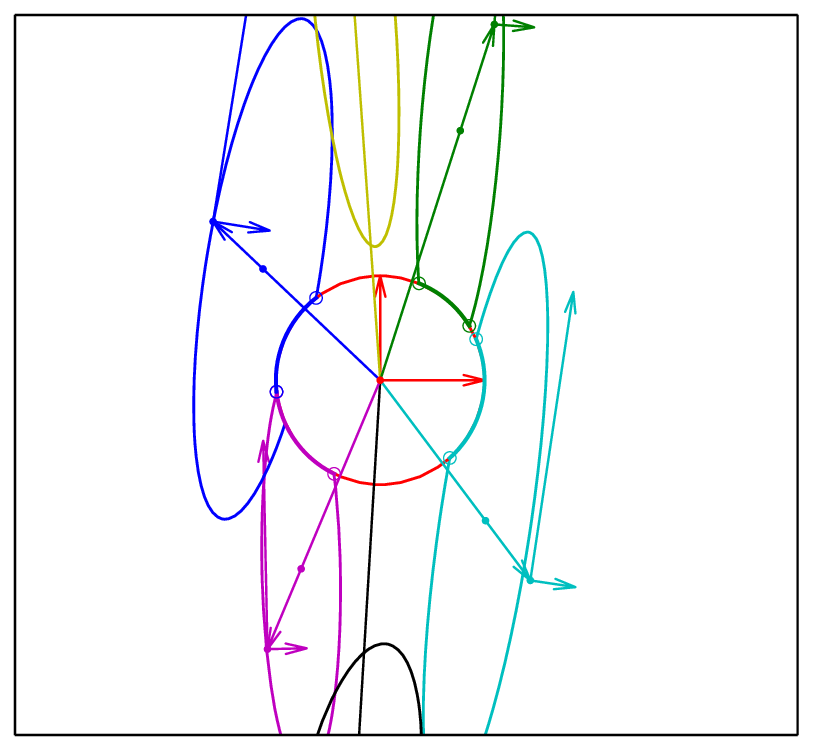}\\
\caption{Auxiliary problem associated with the diffeomorphic transformation of the cell boundary: points of the common curves are mapped onto points of the unit circle, at which the growing ellipses, transformed into fixed size ellipses moving towards the origin, come into contact at a time given by the contact function.}
\label{fig.1005}
\end{figure}
 
In the remainder of this work, we parameterize $S^1$ by an angle $\theta\in[-\pi,\pi)$ measured from $\underline{u}_1^\alpha$.
Let $\underline{x}^{\prime}(\theta):=\cos\theta\,\underline{e}_1+\sin\theta\,\underline{e}_2$ and $\mathbf{R}:=\underline{e}_1\otimes\underline{u}_1^{\alpha}+\underline{e}_2\otimes\underline{u}_2^{\alpha}$, so that $\mathbf{R}^T\cdot\underline{x}^{\prime}(\theta)=\cos\theta\,\underline{u}_1^\alpha+\sin\theta\,\underline{u}_2^\alpha\in S^1$.
With a slight abuse of notation, we write $\chi(\theta):=\chi(\mathbf{R}^T\cdot\underline{x}^{\prime}(\theta))$.
Then, every point of a cell $\mathcal{C}_{\alpha}$ that is star convex at $\underline{x}_{\alpha}$ is of the form $\underline{x}(\xi,\theta)$, with $\theta\in[-\pi,\pi)$ and $0\leq\xi\leq\chi(\theta)$, i.e.,
\begin{equation}
\mathcal{C}_{\alpha}=\left\{\underline{x}_{\alpha}+\xi\,\mathbf{Z}_{\alpha}^{-1/2}\cdot\mathbf{R}^T\cdot\underline{x}^{\prime}(\theta),\,0\leq\xi\leq\chi(\theta),\;\theta\in[-\pi,\pi)\right\}.
\end{equation}
Since $\mathbf{Z}_{\alpha}^{-1/2}=v_1^{\alpha}\,\underline{u}_1^{\alpha}\otimes\underline{u}_1^{\alpha}+v_2^{\alpha}\,\underline{u}_2^{\alpha}\otimes\underline{u}_2^{\alpha}$, this is recast as
\begin{equation}
\mathcal{C}_{\alpha}=\left\{\underline{x}_{\alpha}+\xi\,(v_1^{\alpha}\cos\theta\,\underline{u}_1^{\alpha}+v_2^{\alpha}\sin\theta\,\underline{u}_2^{\alpha}),\,0\leq\xi\leq\chi(\theta),\;\theta\in[-\pi,\pi)\right\}.
\end{equation}
Similarly, the boundary $\mathcal{I}_{\alpha}$ of $\mathcal{C}_{\alpha}$ is given by
\begin{equation}
\mathcal{I}_{\alpha}=\left\{\underline{x}_{\alpha}+\chi(\theta)\,\mathbf{Z}_{\alpha}^{-1/2}\cdot\mathbf{R}^T\cdot\underline{x}^{\prime}(\theta),\,\theta\in[-\pi,\pi)\right\},
\end{equation}
or, equivalently, by
\begin{equation}
\mathcal{I}_{\alpha}=\left\{\underline{x}_{\alpha}+\chi(\theta)\,(v_1^{\alpha}\cos\theta\,\underline{u}_1^{\alpha}+v_2^{\alpha}\sin\theta\,\underline{u}_2^{\alpha}),\,\theta\in[-\pi,\pi)\right\}.
\end{equation}

\section{Local geometry of planar anisotropic Voronoi cells}\label{sec:local}

The local geometry of planar anisotropic Voronoi cells is characterized in this section by expressing the normal and tangent vectors at the cell boundary, the infinitesimal surface and contour length elements as well as the curvature of the cell boundary in terms of the contact function $\chi$.

\subsection{Normal and tangent vectors of cell boundary}
Every point $\underline{x}\in\mathcal{I}_{\alpha}$ on the cell boundary $\mathcal{I}_\alpha$ of a star convex cell is parameterized in terms of some angle $\theta$ so that the normal $\underline{n}(\theta)$ is given by 
\begin{equation}
\underline{n}(\theta)=\frac{\partial\underline{t}(\theta)/\partial \theta}{\left\|\partial\underline{t}(\theta)/\partial \theta\right\|}
\end{equation}
where $\underline{t}(\theta)$ is the unit tangent vector to $\mathcal{I}_{\alpha}$ at $\underline{x}(\theta)$ given by 
\begin{equation}
\underline{t}(\theta)=\frac{\partial\underline{x}(\theta)/\partial \theta}{\left\|\partial\underline{x}(\theta)/\partial \theta\right\|}.
\end{equation}
From previous derivations, we have:
\begin{equation}\label{eq:150}
\underline{t}(\theta)=
\frac{A_\alpha(\theta)\underline{u}_1^{\alpha}+B_\alpha(\theta)\underline{u}_2^{\alpha}}{\sqrt{A_\alpha(\theta)^2+B_\alpha(\theta)^2}}.
\end{equation}
where
\begin{equation}
A_{\alpha}(\theta)\equiv v_1^{\alpha}\left[\chi^{\prime}(\theta)\cos\theta-\chi(\theta)\sin\theta\right]
\;\;
\mathrm{and}\;\;
B_{\alpha}(\theta)\equiv v_2^{\alpha}\left[\chi^{\prime}(\theta)\sin\theta+\chi(\theta)\cos\theta\right].
\end{equation}
Deriving an equation for the normal $\underline{n}(\theta)$ by further differentiation of the tangent yields complicated expressions with additional derivatives of the contact function.
Instead, since $\mathcal{I}_{\alpha}\in\mathbb{R}^2$, the normal can be obtained by a simple active rotation of $\underline{t}(\theta)$:
\begin{equation}
\underline{n}(\theta)=
\mathbf{Q}\cdot\underline{t}(\theta)
\end{equation}
where $\mathbf{Q}$ is the counter-clockwise rotation given as $\mathbf{Q}=\underline{e}_1\otimes\underline{e}_2-\underline{e}_2\otimes\underline{e}_1$ so that $\underline{n}(\theta)$ is an outward normal of the boundary of $\mathcal{C}_{\alpha}$.
Similarly, for the case in which $\underline{u}_1^{\alpha}\times\underline{u}_2^{\alpha}=\underline{e}_3$, we have
$\mathbf{Q}=\underline{u}_1^{\alpha}\otimes\underline{u}_2^{\alpha}-\underline{u}_2^{\alpha}\otimes\underline{u}_1^{\alpha}$ so that
\begin{equation}
\underline{n}(\theta)=
\frac{B_{\alpha}(\theta)\underline{u}_1^{\alpha}-A_{\alpha}(\theta)\underline{u}_2^{\alpha}}{\sqrt{A_\alpha(\theta)^2+B_\alpha(\theta)^2}}.
\end{equation}
Some tangent and normal vectors of a cell boundary are illustrated in Figure~\ref{fig.1100} for a model anisotropic Voronoi cell.
	
\begin{figure}
\centering
\includegraphics[clip,scale=.6]{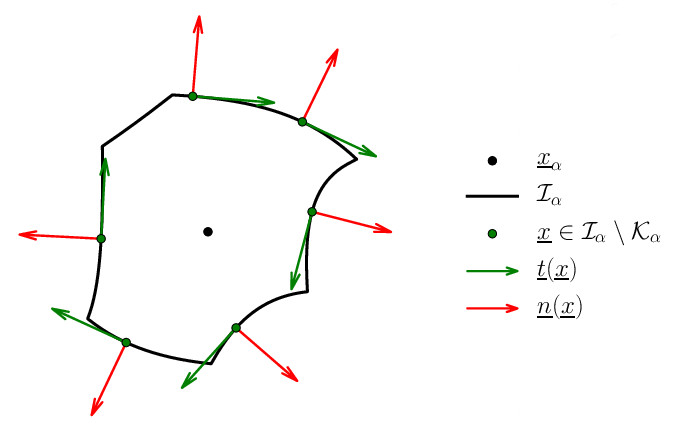}
\caption{Normal and tangent vectors of the cell boundary $\mathcal{I}_{\alpha}$.}
\label{fig.1100}
\end{figure}
	
\subsection{Parameterization of surface elements}
Integrals defined over $\mathcal{C}_{\alpha}$ can be recast as follows:
\begin{equation}
\int_{\mathcal{C}_{\alpha}}\bullet(\underline{x})\mathrm{d}A=
\int\limits_{\theta=-\pi}^{\pi}\int\limits_{\xi=0}^{\chi(\theta)}
\bullet(\underline{x}(\xi,\theta))
\left\|\frac{\partial \underline{x}}{\partial \xi}\times\frac{\partial \underline x}{\partial \theta}\right\|
\mathrm{d}\xi\mathrm{d}\theta
\end{equation}
where $\mathrm{d}A$ is an infinitesimal surface element of $\mathcal{C}_{\alpha}$ and $\bullet(\underline{x})$ is an arbitrary tensor field defined over $\mathcal{C}_{\alpha}$.
We have
\begin{equation}
\frac{\partial\underline{x}}{\partial \xi}=\mathbf{Z}_{\alpha}^{-1/2}\cdot\mathbf{R}^{T}\cdot\underline{x}^{\prime}=
\underline{u}_1^{\alpha}v_1^{\alpha}\cos\theta+\underline{u}_2^{\alpha}v_2^{\alpha}\sin\theta
\end{equation}
and
\begin{equation}
\frac{\partial\underline{x}}{\partial \theta}=\xi\;\mathbf{Z}_{\alpha}^{-1/2}\cdot\mathbf{R}^T\cdot\frac{\partial\underline{x}^{\prime}}{\partial \theta}=
\xi[\underline{u}_2^{\alpha}v_2^{\alpha}\cos\theta-\underline{u}_1^{\alpha}v_1^{\alpha}\sin\theta]
\end{equation}
so that
\begin{align}
\frac{\partial \underline x}{\partial \xi}\times\frac{\partial \underline x}{\partial \theta}
=&\,\left[\underline{u}_1^{\alpha}v_1^{\alpha}\cos\theta+\underline{u}_2^{\alpha}v_2^{\alpha}\sin\theta\right]\times
\xi[\underline{u}_2^{\alpha}v_2^{\alpha}\cos\theta-\underline{u}_1^{\alpha}v_1^{\alpha}\sin\theta]\\
=&\,\xi\left[v_1^{\alpha}v_2^{\alpha}\cos^2\theta+v_1^{\alpha}v_2^{\alpha}\sin^2\theta\right]\underline{e}_3\\
=&\,\pm\xi v_1^{\alpha}v_2^{\alpha}\underline{e}_3
\end{align}
where $\underline{u}_1^{\alpha}\times\underline{u}_2^{\alpha}=\pm\underline{e}_3$.
Consequently, we have
\begin{equation}
\left\|\frac{\partial \underline x}{\partial \xi}\times\frac{\partial \underline x}{\partial \theta}\right\|=\xi v_{1}^{\alpha}v_{2}^{\alpha}=\xi\;\mathrm{det}\mathbf{Z}^{-1/2}_{\alpha}
\end{equation}
and integrals on $\mathcal{C}_{\alpha}$ are given by:
\begin{align}
\int_{\mathcal{C}_{\alpha}}\bullet(\vec{x})\mathrm{d}A
=&\,v_1^{\alpha}v_2^{\alpha}\int\limits_{\theta=-\pi}^{\pi}\int\limits_{\xi=0}^{\chi(\theta)}
\bullet(\underline{x}(\xi,\theta))
\;\xi\;
\mathrm{d}\xi\mathrm{d}\theta\\
=&\,\frac{1}{\sqrt{\mathrm{det}\mathbf{Z}_{\alpha}}}\int\limits_{\theta=-\pi}^{\pi}\int\limits_{\xi=0}^{\chi(\theta)}
\bullet(\underline{x}(\xi,\theta))
\;\xi\;
\mathrm{d}\xi\mathrm{d}\theta.
\end{align}
The surface element $\mathrm{d}A$ at a point $\underline{x}(\xi,\theta)$ of the cell $\mathcal{C}_{\alpha}$ is given by $v_1^{\alpha}v_2^{\alpha}\xi\mathrm{d}\xi\mathrm{d}\theta$.
	
\subsection{Parameterization of contour length elements}
Every point $\underline{x}$ on the boundary $\mathcal{I}_{\alpha}$ of a planar Voronoi cell $\mathcal{C}_{\alpha}$ that is star convex at the nucleation point $\underline{x}_{\alpha}$ is identified as $\underline{x}(\theta)=\underline{x}(\xi=\chi(\theta),\theta)$ where $\theta\in[-\pi,\pi)$.
Therefore, integrals defined over $\mathcal{I}_{\alpha}$ can be recast as follows:
\begin{equation}
\int_{\mathcal{I}_{\alpha}}\bullet(\underline{x})\mathrm{d}\ell=
\int\limits_{\theta=-\pi}^{\pi}
\bullet(\underline{x}(\theta))
\left\|\frac{\partial \underline{x}}{\partial \theta}\right\|
\mathrm{d}\theta
\end{equation}
where $\mathrm{d}\ell$ is an infinitesimal length element of $\mathcal{I}_{\alpha}$ and $\bullet(\vec{x})$ is an arbitrary tensor field defined over $\mathcal{C}_{\alpha}\cup\mathcal{I}_{\alpha}$.
	
Given that for every $\underline{x}\in\mathcal{I}_{\alpha}$, $\underline{x}(\theta)=\underline{x}_{\alpha}+\chi(\theta)\mathbf{Z}_{\alpha}^{-1/2}\cdot\mathbf{R}^T\cdot\underline{x}^{\prime}(\theta)$, we have:
\begin{align}
\frac{\partial \underline{x}}{\partial \theta}
=&\,\frac{\partial \left[\underline{x}_{\alpha}+\chi(\theta)\mathbf{Z}_{\alpha}^{-1/2}\cdot\mathbf{R}\cdot\underline{x}^{\prime}(\theta)\right]}{\partial \theta}\\
=&\,\frac{\partial \chi}{\partial \theta}\mathbf{Z}_{\alpha}^{-1/2}\cdot\mathbf{R}^T\cdot\underline{x}^{\prime}(\theta)+
\chi(\theta)\mathbf{Z}_{\alpha}^{-1/2}\cdot\mathbf{R}^T\cdot\frac{\partial \underline{x}^{\prime}}{\partial \theta}.
\end{align}
We then define $\chi^{\prime}(\theta)\equiv\frac{\partial \chi (\theta)}{\partial \theta}$ so that
\begin{equation}
\begin{split}
\frac{\partial \underline{x}}{\partial \theta}=&\,
\chi^{\prime}(\theta)\mathbf{Z}_{\alpha}^{-1/2}\cdot\mathbf{R}^T\cdot\left[\underline{e}_1\cos\theta+\underline{e}_2\sin\theta\right]+
\chi(\theta)\mathbf{Z}_{\alpha}^{-1/2}\cdot\mathbf{R}^T\cdot\frac{\partial \left[\underline{e}_1\cos\theta+\underline{e}_2\sin\theta\right]}{\partial \theta}\\
=&\,
\chi^{\prime}(\theta)\left[\underline{u}_1^{\alpha}v_1^{\alpha}\cos\theta+\underline{u}_2^{\alpha}v_2^{\alpha}\sin\theta\right]+
\chi(\theta) \left[\underline{u}_2^{\alpha}v_2^{\alpha}\cos\theta-\underline{u}_1^{\alpha}v_1^{\alpha}\sin\theta\right]\\
=&\,
v_1^{\alpha}\left[\chi^{\prime}(\theta)\cos\theta-\chi(\theta)\sin\theta\right]\underline{u}_1^{\alpha}+
v_2^{\alpha}\left[\chi^{\prime}(\theta)\sin\theta+\chi(\theta)\cos\theta\right]\underline{u}_2^{\alpha}.
\end{split}
\end{equation}
Therefore, the infinitesimal contour length element $\mathrm{d}\ell$ of $\mathcal{I}_{\alpha}$ is given by
\begin{equation}
\mathrm{d}\ell=
\left\|\frac{\partial \underline{x}}{\partial \theta}\right\|
\mathrm{d}\theta=
\sqrt{A_\alpha(\theta)^2+B_\alpha(\theta)^2}\;\mathrm{d}\theta.
\end{equation}
	
\subsection{Curvature of cell boundary}
For a Voronoi cell $\mathcal{C}_{\alpha}$, the curvature $\kappa$ is discontinuous along $\mathcal{I}_{\alpha}$.
More precisely, curvature discontinuities arise at the common points of the structure, see Fig.~\ref{fig.1200}.
To address this issue, we define the following normals and tangents at a common point $\underline{x}\in\mathcal{K}_{\alpha}\subset\mathcal{I}_{\alpha}$:
\begin{equation}
\underline{n}^{-}(\underline{x})\equiv\lim_{\theta-\theta_{\!\underline{x}}\rightarrow 0^-}
\underline{n}(\underline{x}(\chi(\theta),\theta))
\;\;\;\mathrm{and}\;\;\;
\underline{n}^{+}(\underline{x})\equiv\lim_{\theta-\theta_{\!\underline{x}}\rightarrow 0^+}
\underline{n}(\underline{x}(\chi(\theta),\theta))
\;\;\forall\;\underline{x}\in\mathcal{K}_{\alpha}
\end{equation}
and
\begin{equation}
\underline{t}^{-}(\underline{x})\equiv\lim_{\theta-\theta_{\!\underline{x}}\rightarrow 0^-}
\underline{t}(\underline{x}(\chi(\theta),\theta))
\;\;\;\mathrm{and}\;\;\;
\underline{t}^{+}(\underline{x})\equiv\lim_{\theta-\theta_{\!\underline{x}}\rightarrow 0^+}
\underline{t}(\underline{x}(\chi(\theta),\theta))
\;\;\forall\;\underline{x}\in\mathcal{K}_{\alpha}
\end{equation}
where
\begin{equation}
\theta_{\!\underline{y}}\equiv\theta\;\mathrm{such}\;\mathrm{that}\;\underline{x}(\chi(\theta),\theta)=\underline{y}\in\mathcal{I}_{\alpha}.
\end{equation}
These quantities, which are depicted in Fig.~\ref{fig.1200} for the model Voronoi cell of Fig.~\ref{fig.1100}, will be used for the derivation of expressions of Minkowski tensors of type two. 

\begin{figure}
\centering
\includegraphics[clip,scale=.6]{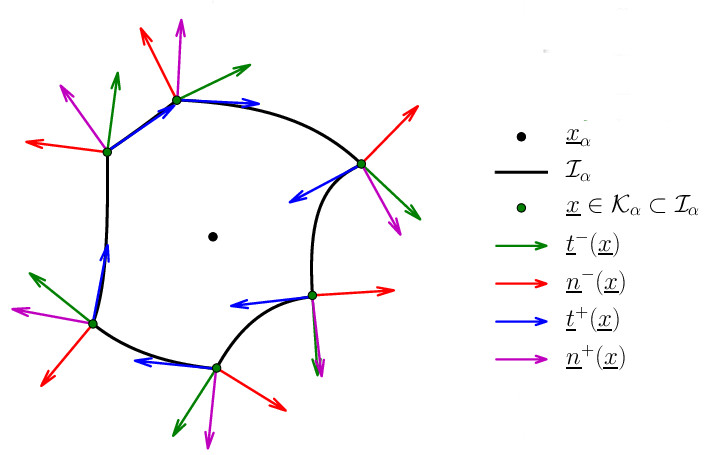}
\caption{Normal and tangent vectors at singular points of the cell boundary $\mathcal{I}_{\alpha}$.}
\label{fig.1200}
\end{figure}

At any point $\underline{x}(\chi(\theta),\theta)$ of the common curve $\mathcal{I}_{\alpha\beta}$, the local curvature is uniquely defined by
\begin{equation}\label{eq:200}
\kappa(\theta)=\frac{\|\partial \underline{t}(\theta)/\partial \theta\|}{\|\partial \underline{x}(\theta)/\partial \theta\|}.
\end{equation}
Upon minimal derivative work, we obtain the following expression:
\begin{equation}
\kappa(\theta)=
\sqrt{\frac{[A_{\alpha}(\theta)D_{\alpha}(\theta)-B_{\alpha}(\theta)C_{\alpha}(\theta)]^2}{[A_{\alpha}(\theta)^2+B_{\alpha}(\theta)^2]^3}}
\end{equation}
where
\begin{equation}
C_{\alpha}(\theta)\equiv
\frac{\partial A_{\alpha}(\theta)}{\partial \theta}=v_1^{\alpha}\left\{\vphantom{A^{A^A}}[\chi^{\prime\prime}(\theta)-\chi(\theta)]\cos\theta-2\chi^{\prime}(\theta)\sin\theta\right\}
\end{equation}
and
\begin{equation}
D_{\alpha}(\theta)\equiv
\frac{\partial B_{\alpha}(\theta)}{\partial \theta}=v_2^{\alpha}\left\{\vphantom{A^{A^A}}[\chi^{\prime\prime}(\theta)-\chi(\theta)]\sin\theta+2\chi^{\prime}(\theta)\cos\theta\right\}.
\end{equation}

\section{Minkowski tensors of star-convex cells}\label{sec:mt}
In this section, we provide analytical expressions for Minkowski tensors of 0th, 1st and 2nd kind of planar anisotropic Voronoi cells.
Those expressions are obtained by building on the descriptors of local geometry presented in the previous section.
For the sake of simplicity, we assume the Voronoi cell is simply connected, and star convex at its nucleation point.
	
\subsection{0th kind Minkowski tensors}
Minkowski tensors of the 0th kind are defined as follows for a simply connected anisotropic Voronoi cell $\mathcal{C}_{\alpha}$ that is star convex at its nucleation point:
\begin{equation}
\mathcal{W}^{r,0}_{0}(\mathcal{C}_{\alpha})\equiv\int_{ \mathcal{C}_{\alpha}}\underline{x}^{\otimes^r}\;\mathrm{d}A=
v_1^{\alpha}v_2^{\alpha}\int\limits_{\theta=-\pi}^{\pi}\int\limits_{\xi=0}^{\chi(\theta)}
\!\!\xi\;\underline{x}^{\otimes^r}
\mathrm{d}\xi\mathrm{d}\theta.
\end{equation}
Upon introducing the following integral:
\begin{equation}
I^{i,j}_0(\mathcal{C}_{\alpha})\equiv\binom{i+j}{i}\frac{(v_1^{\alpha})^{i+1}(v_2^{\alpha})^{j+1}}{i+j+2}\int\limits_{-\pi}^{\pi}[\chi(\theta)]^{i+j+2}(\cos\theta)^i(\sin\theta)^j\mathrm{d}\theta,
\end{equation}
we obtain the following expression:
\begin{equation}
\mathcal{W}^{r,0}_0(\mathcal{C}_{\alpha})=\sum_{i=0}^{r}\sum_{j=0}^{r-i}\binom{r}{i+j}\,
\underline{x}_{\alpha}^{\otimes^{r-i-j}}\!\!\!\odot\underline{u}_1^{{\alpha}^{\otimes^i}}\!\!\!\odot\underline{u}_2^{{\alpha}^{\otimes^j}}I^{i,j}_0(\mathcal{C}_{\alpha}).
\end{equation}
The presence of $\underline{x}_{\alpha}$ in the above expression is a clear indication that $\mathcal{W}^{r,0}_0$ is not translation invariant.
Indeed, the 0th kind Minkowski tensors of order greater than or equal to one (\textit{i.e.} $r\geq1$) are translation covariant.
That is, a translation of the cell $\mathcal{C}_{\alpha}$ by a vector $\underline{t}$ is characterized by the following transformation of the corresponding 0th kind Minkowski tensor:
\begin{equation}
\mathcal{W}^{r,0}_0(\mathcal{C}_{\alpha}\cupplus\underline{t})=\sum_{n=0}^r\binom{r}{n}\,\underline{t}^{\otimes^n}\!\!\odot\mathcal{W}^{r-n,0}_0(\mathcal{C}_{\alpha}).
\end{equation}
As a result, the anisotropy of a cell characterized by means of 0th kind Minkowski tensors is related to the origin with respect to which the points in $\mathcal{C}_{\alpha}$ are measured.
A common choice is to set the mass center $\underline{p}_0(\mathcal{C}_{\alpha})\equiv\mathcal{W}_0^{1,0}(\mathcal{C}_\alpha)/\mathcal{W}_0^{0,0}(\mathcal{C}_\alpha)$ as the origin and to characterize cell anisotropies with the following tensors:
\begin{equation}
{}^*\mathcal{W}^{r,0}_0(\mathcal{C}_{\alpha})\equiv
\mathcal{W}^{r,0}_0(\mathcal{C}_{\alpha}\cupplus-\underline{p}_0(\mathcal{C}_{\alpha}))=\sum_{n=0}^r\binom{r}{n}[-\underline{p}_0(\mathcal{C}_{\alpha})]^{\otimes^n}\odot\mathcal{W}^{r-n,0}_0(\mathcal{C}_{\alpha})\;\;\;\mathrm{for}\;r\geq 2
\end{equation}
Upon developing this expression, we see that the direct calculation of tensors ${}^*\mathcal{W}^{r,0}_0(\mathcal{C}_{\alpha})$ without resolving the mass center first does not simplify the calculation, nor does it decrease the number of integrals $I^{i,j}_0(\mathcal{C}_{\alpha})$ to be evaluated.
Therefore, instead, we recommend to proceed as follows.
First, compute the area covered by $\mathcal{C}_\alpha$ in $\mathbb{R}^2$:
\begin{equation}\label{eq:052}
W_0(\mathcal{C}_{\alpha})=I^{0,0}_0(\mathcal{C}_{\alpha}).
\end{equation}
Second, compute the mass center of $\mathcal{C}_\alpha$:
\begin{equation}
\underline{p}_0(\mathcal{C}_{\alpha})=\underline{x}_{\alpha}+\frac{\underline{u}_1^{\alpha}I^{1,0}_0(\mathcal{C}_{\alpha})+\underline{u}_2^{\alpha}I^{0,1}_0(\mathcal{C}_{\alpha})}{W_0(\mathcal{C}_{\alpha})}.
\end{equation}
Then, compute higher order tensors as follows:
\begin{equation}
{}^*\mathcal{W}^{r,0}_0(\mathcal{C}_{\alpha})=\sum_{i=0}^{r}\sum_{j=0}^{r-i}\binom{r}{i+j}\,
[\underline{x}_{\alpha}-\underline{p}_0(\mathcal{C}_{\alpha})]^{\otimes^{r-i-j}}\!\!\!\odot\underline{u}_1^{{\alpha}^{\otimes^i}}\!\!\!\odot\underline{u}_2^{{\alpha}^{\otimes^j}}I^{i,j}_0(\mathcal{C}_{\alpha})\,.
\end{equation}	

\subsection{1st kind Minkowski tensors}
The 1st kind Minkowski tensors are given as follows:
\begin{equation}
\mathcal{W}^{r,s}_1(\mathcal{C}_{\alpha})=\int_{\mathcal{I}_{\alpha}}\underline{x}^{\otimes^r}\!\!\odot\underline{n}(\underline{x})^{\otimes^s}\mathrm{d}\ell=
\int\limits_{\theta=-\pi}^{\pi}
\underline{x}^{\otimes^r}\!\!\odot\underline{n}(\theta)^{\otimes^s}
\sqrt{A_\alpha(\theta)^2+B_\alpha(\theta)^2}\;
\mathrm{d}\theta.
\end{equation}
Then, upon introducing the following integral:
\begin{equation}
I^{i,j,k,\ell}_1(\mathcal{C}_{\alpha})\equiv
(-1)^k
\binom{i+j}{j}
(v_1^{\alpha})^{i}(v_2^{\alpha})^{j}\!\!\!
\int\limits_{\theta=-\pi}^{\pi}
\frac{\chi(\theta)^{i+j}
(\cos\theta)^{i}(\sin\theta)^{j}
A_{\alpha}(\theta)^{k}
B_{\alpha}(\theta)^{\ell}}{\left\{A_{\alpha}(\theta)^2+B_{\alpha}(\theta)^2\right\}^{\frac{k+\ell-1}{2}}}
\mathrm{d}\theta
\end{equation}
we obtain the following expression for the 1st kind Minkowski tensors:
\begin{equation}
\mathcal{W}^{r,s}_1(\mathcal{C}_\alpha)=
\sum_{i=0}^r\sum_{j=0}^{r-i}\sum_{k=0}^{s}\binom{r}{i+j}\binom{s}{k}
\underline{x}_{\alpha}^{\otimes^{r-i-j}}\!\!\!\odot
\underline{u}_1^{\alpha^{\otimes^{s+i-k}}}\!\!\!\!\odot\underline{u}_2^{\alpha^{\otimes^{j+k}}}
I^{i,j,k,s-k}_1(\mathcal{C}_{\alpha}).	
\end{equation}
Special cases of the form $\mathcal{W}^{r,0}_1(\mathcal{C}_\alpha)$  are given by
\begin{equation}
\mathcal{W}^{r,0}_1(\mathcal{C}_\alpha)=
\sum_{i=0}^r\sum_{j=0}^{r-i}\binom{r}{i+j}
\underline{x}_{\alpha}^{\otimes^{r-i-j}}\!\!\!\odot
\underline{u}_1^{\alpha^{\otimes^{i}}}\!\!\!\!\odot\underline{u}_2^{\alpha^{\otimes^{j}}}
I^{i,j,0,0}_1(\mathcal{C}_{\alpha})
\end{equation}
which, similarly as $\mathcal{W}^{r,0}_0$ are translation covariant tensors for $r\geq 1$ and in which $I^{i,j,0,0}_1$ simplifies to
\begin{equation}
I^{i,j,0,0}_1(\mathcal{C}_{\alpha})\equiv
\binom{i+j}{j}
(v_1^{\alpha})^{i}(v_2^{\alpha})^{j}
\int\limits_{\theta=-\pi}^{\pi}
\chi(\theta)^{i+j}
(\cos\theta)^{i}(\sin\theta)^{j}
\sqrt{A_\alpha(\theta)^2+B_\alpha(\theta)^2}\;
\mathrm{d}\theta
\end{equation}
Other specific cases of the form $\mathcal{W}^{0,s}_1(\mathcal{C}_\alpha)$ are given by
\begin{equation}
\mathcal{W}^{0,s}_1(\mathcal{C}_\alpha)=
\sum_{k=0}^{s}\binom{s}{k}
\underline{u}_1^{\alpha^{\otimes^{s-k}}}\!\!\!\!\odot\underline{u}_2^{\alpha^{\otimes^{k}}}
I^{0,0,k,s-k}_1(\mathcal{C}_{\alpha})
\end{equation}
which are translation invariant and in which
\begin{equation}
I^{0,0,k,\ell}_1(\mathcal{C}_{\alpha})\equiv
(-1)^k
\int\limits_{\theta^{\prime}=-\pi}^{\pi}
\frac{A_\alpha(\theta)^{k}B_\alpha(\theta)^{\ell}}{\left[A_\alpha(\theta)^{2}+B_\alpha(\theta)^2\right]^{\frac{k+\ell-1}{2}}}
\mathrm{d}\theta.
\end{equation}

\subsection{2nd kind Minkowski tensors}
The 2nd kind Minkowski tensors of a planar set with continuous closed boundary $\mathcal{I}$ are given by
\begin{equation}
\begin{split}
\mathcal{W}^{r,s}_2\equiv\int_{\mathcal{I}}\kappa(\underline{x})\underline{x}^{\otimes^r}\!\!\odot\underline{n}(\underline{x})^{\otimes^s}\mathrm{d}\ell
\end{split}
\end{equation}
where $\kappa$ is the local curvature and $\underline{n}$ is the outward unit normal at a point $\underline{x}$ in $\mathcal{I}$.
As pointed out in the previous section, the curvature is not uniquely defined at the common points of the cell boundary $\mathcal{I}_{\alpha}$ of a Voronoi cell.
For a given Voronoi cell $\mathcal{C}_\alpha$, we denote the set of common points as $\mathcal{K}_{\alpha}$, and account for the effect of discontinuity in the curvature in an added term as follows:
\begin{equation}
\begin{split}
\mathcal{W}^{r,s}_2=\sum_{\beta}\int_{\mathcal{I}_{\alpha\beta}}\kappa(\underline{x})\underline{x}^{\otimes^r}\!\!\odot[\underline{n}(\underline{x})]^{\otimes^s}\mathrm{d}\ell+\sum_{\underline{x}\in\mathcal{K}_{\alpha}}\mathcal{D}^{r,s}(\underline{x})
\end{split}
\end{equation}
where every $\mathcal{I}_{\alpha\beta}$ is a disjoint subset of $\mathcal{I}_{\alpha}$ lying between two neighboring common points and such that $\mathcal{I}_{\alpha}=\cup_{\beta}\mathcal{\mathcal{I}_{\alpha\beta}}$.
The integrals of the first term of the right hand side can be recast in
\begin{equation}
\begin{split}
\int_{\mathcal{I}_{\alpha\beta}}\kappa(\underline{x})\underline{x}^{\otimes^r}\!\!\odot\underline{n}(\underline{x})^{\otimes^s}\mathrm{d}\ell=
\int\limits_{\theta=\theta^{\,-}_{\beta}}^{\theta^{\,+}_{\beta}}
\frac{\kappa(\theta)\,\underline{x}^{\otimes^r}\!\!\odot\underline{n}(\theta)^{\otimes^s}}{\left\{A_{\alpha}(\theta)^2+
B_{\alpha}(\theta)^2\right\}^{-1/2}}
\mathrm{d}\theta
\end{split}
\end{equation}
where $\mathcal{I}_{\alpha\beta}=\{\underline{x}(\chi(\theta),\theta)\,|\,\theta\in(\theta^{\,-}_{\beta},\theta^{\,+}_{\beta})\}$.
As we introduce the following integral:
\begin{equation}
I^{i,j,k,\ell}_2(\mathcal{I}_{\alpha\beta})\equiv
(-1)^k
\binom{i+j}{j}
(v_1^{\alpha})^{i}(v_2^{\alpha})^{j}\!\!\!
\int\limits_{\theta=\theta^{-}_\beta}^{\theta^{+}_\beta}
\frac{\chi(\theta)^{i+j}
(\cos\theta)^{i}(\sin\theta)^{j}
A_{\alpha}(\theta)^{k}
B_{\alpha}(\theta)^{\ell}}{\left[A_{\alpha}(\theta)^2+B_{\alpha}(\theta)^2\right]^{\frac{k+\ell-1}{2}}}
\,\kappa(\theta)\,
\mathrm{d}\theta,
\end{equation}
we obtain the following expression for the 2nd kind Minkowski tensors:
\begin{equation}
\mathcal{W}^{r,s}_2(\mathcal{C}_\alpha)=
\sum_{\beta}
\sum_{i=0}^r\sum_{j=0}^{r-i}\sum_{k=0}^{s}\binom{r}{i+j}\binom{s}{k}
\underline{x}_{\alpha}^{\otimes^{r-i-j}}\!\!\!\odot
\underline{u}_1^{\alpha^{\otimes^{s+i-k}}}\!\!\!\!\odot\underline{u}_2^{\alpha^{\otimes^{j+k}}}
I^{i,j,k,s-k}_2(\mathcal{I}_{\alpha\beta})
+\sum_{\underline{x}\in\mathcal{K}_{\alpha}}\mathcal{D}^{r,s}(\underline{x}).
\end{equation}
As is customary to account for discontinuities in the curvature, see, e.g., \cite{Schroder2010}, we resort to a parallel body construction for the derivation of an expression for the $\mathcal{D}^{r,s}(\underline{x})$ terms.
For that, we first let $B_{\epsilon}$ be a ball of radius $\epsilon$ in $\mathbb{R}^2$.
Then, we consider the set obtained by the Minkowski addition $\mathcal{C}_{\alpha}\cupplus B_{\epsilon}$ with continuous closed boundary $\partial(\mathcal{C}_{\alpha}\cupplus B_{\epsilon})$.
This construction is shown in Fig.~\ref{fig.1300} for the model Voronoi cell of this paper.
Then, an expression for $\mathcal{D}^{r,s}(\underline{x})$ is obtained by taking the limit of the integral similar to $\mathcal{W}_2^{r,s}$ over $\partial(\mathcal{C}_{\alpha}\cupplus B_{\epsilon})$ between the points $\underline{x}+\epsilon\underline{n}^{-}(\underline{x})$ and $\underline{x}+\epsilon\underline{n}^{+}(\underline{x})$:
\begin{equation}
\mathcal{D}^{r,s}(\underline{x})=
\lim_{\epsilon\rightarrow 0}
\int\limits_{\lambda=0}^{\gamma(\underline{x})}\kappa(\underline{x}^{*})\,\underline{x}^{{ *}^{\otimes^r}}\!\!\odot\underline{n}^{{ *}^{\otimes^s}}\!\epsilon\,\mathrm{d}\lambda=
\lim_{\epsilon\rightarrow 0}
\int\limits_{\lambda=0}^{\gamma(\underline{x})}\underline{x}^{{ *}^{\otimes^r}}\!\!\odot\underline{n}^{{ *}^{\otimes^s}}\mathrm{d}\lambda
\;\;\;\;\forall\underline{x}\in\mathcal{K}_{\alpha}
\end{equation}
where $\kappa(\underline{x}^{*})=1/\epsilon$ and the angle $\gamma(\underline{x})$ is such that
\begin{equation}
\cos(\gamma(\underline{x}))=\underline{n}^{-}(\underline{x})\cdot\underline{n}^{+}(\underline{x})
\;\;\;\;\forall\underline{x}\in\mathcal{K}_{\alpha}.
\end{equation}
The points $\underline{x}^{*}$ and normals $\underline{n}^{*}$ in the integral above can be parameterized as follows using the orthogonal basis formed by the pair $(\underline{n}^{-},\underline{t}^{-})$:
\begin{equation}
\underline{x}^{*}(\lambda\,|\,\underline{x})=\underline{x}+\epsilon\left[\underline{n}^{-}(\underline{x})\cos\lambda+\underline{t}^{-}(\underline{x})\sin\lambda\right],
\;\;\;\;\forall\lambda\in[0,\gamma(\underline{x})],\;\underline{x}\in\mathcal{K}_{\alpha}
\end{equation}
and
\begin{equation}
\underline{n}^{*}(\lambda\,|\,\underline{x})=\underline{n}^{-}(\underline{x})\cos\lambda+\underline{t}^{-}(\underline{x})\sin\lambda,
\;\;\;\;\forall\lambda\in[0,\gamma(\underline{x})],\;\underline{x}\in\mathcal{K}_{\alpha}.
\end{equation}
Thus for every $\underline{x}$ in $\mathcal{K}_{\alpha}$ we have
\begin{equation}
\begin{split}
\mathcal{D}^{r,s}(\underline{x})=&\,
\lim_{\epsilon\rightarrow 0}
\int\limits_{\lambda=0}^{\gamma(\underline{x})}\left\{\underline{x}+\epsilon\left[\underline{n}^{-}(\underline{x})\cos\lambda+\underline{t}^{-}(\underline{x})\sin\lambda\right]\right\}^{\otimes^r}\!\!\odot\left[\underline{n}^{-}(\underline{x})\cos\lambda+\underline{t}^{-}(\underline{x})\sin\lambda\right]^{\otimes^s}\mathrm{d}\lambda\\
=&\,
\sum_{k=0}^r\binom{r}{k}\lim_{\epsilon\rightarrow 0}
\int\limits_{\lambda=0}^{\gamma(\underline{x})}\epsilon^k\underline{x}^{\otimes^{r-k}}\odot\left[\underline{n}^{-}(\underline{x})\cos\lambda+\underline{t}^{-}(\underline{x})\sin\lambda\right]^{\otimes^k}\!\!\odot\left[\underline{n}^{-}(\underline{x})\cos\lambda+\underline{t}^{-}(\underline{x})\sin\lambda\right]^{\otimes^s}\mathrm{d}\lambda
\end{split}
\end{equation}
so that
\begin{equation}
\mathcal{D}^{r,s}(\underline{x})=
\int\limits_{\lambda=0}^{\gamma(\underline{x})}\underline{x}^{\otimes^{r}}\odot\left[\underline{n}^{-}(\underline{x})\cos\lambda+\underline{t}^{-}(\underline{x})\sin\lambda\right]^{\otimes^s}\mathrm{d}\lambda
\;\;\;\;\forall\;\underline{x}\in\mathcal{K}_{\alpha}
\end{equation}
which finally leads up to
\begin{equation}
\mathcal{D}^{r,s}(\underline{x})=
\sum_{k=0}^s\binom{s}{k}\,\underline{x}^{\otimes^{r}}\odot[\underline{n}^{-}(\underline{x})]^{\otimes{s-k}}\odot[\underline{t}^{-}(\underline{x})]^{\otimes{k}}
\mathcal{J}^{s-k,k}(\underline{x})
\;\;\;\;\forall\;\underline{x}\in\mathcal{K}_{\alpha}
\end{equation}
where
\begin{equation}
\mathcal{J}^{m,n}(\underline{x})\equiv\int\limits_{\lambda=0}^{\gamma(\underline{x})}(\cos\lambda)^m(\sin\lambda)^n\mathrm{d}\lambda.
\end{equation}

\begin{figure}
\centering
\includegraphics[clip,scale=.6]{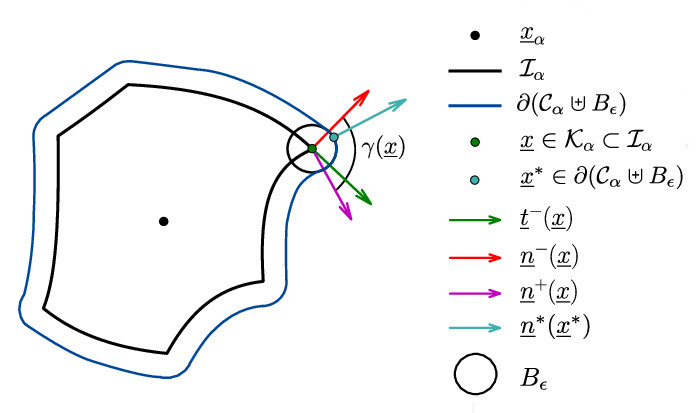}
\caption{Parallel body construction of the cell $\mathcal{C}_{\alpha}$.}
\label{fig.1300}
\end{figure}

Special cases of the form $\mathcal{W}^{r,0}_2(\mathcal{C}_\alpha)$  are given by
\begin{equation}
\mathcal{W}^{r,0}_2(\mathcal{C}_\alpha)=
\sum_{\beta}
\sum_{i=0}^r\sum_{j=0}^{r-i}\binom{r}{i+j}
\underline{x}_{\alpha}^{\otimes^{r-i-j}}\!\!\!\odot
\underline{u}_1^{\alpha^{\otimes^{i}}}\!\!\!\!\odot\underline{u}_2^{\alpha^{\otimes^{j}}}
I^{i,j,0,0}_2(\mathcal{I}_{\alpha\beta})
+\sum_{\underline{x}\in\mathcal{K}_\alpha}\mathcal{D}^{r,0}(\underline{x})
\end{equation}
which, similarly as $\mathcal{W}^{r,0}_0$ and $\mathcal{W}^{r,0}_1$, are translation covariant tensors for $r\geq 1$ and in which $I^{i,j,0,0}_2$ simplifies to
\begin{equation}
I^{i,j,0,0}_2(\mathcal{I}_{\alpha\beta})\equiv
\binom{i+j}{j}
(v_1^{\alpha})^{i}(v_2^{\alpha})^{j}
\int\limits_{\theta=\theta^{-}_\beta}^{\theta^{+}_\beta}
\frac{\chi(\theta)^{i+j}
(\cos\theta)^{i}(\sin\theta)^{j}}{\left[A_\alpha(\theta)^2+B_\alpha(\theta)^2\right]^{-\frac{1}{2}}}
\kappa(\theta)
\mathrm{d}\theta.
\end{equation}
The corresponding contributions from the common points in $\mathcal{I}_{\alpha}$ are 
\begin{equation}
\mathcal{D}^{r,0}(\underline{x})=\underline{x}^{\otimes^r}\gamma(\underline{x})
\;\;\;\;\forall\;\underline{x}\in\mathcal{K}_{\alpha}.
\end{equation}
Other specific cases of the form $\mathcal{W}^{0,s}_2(\mathcal{C}_\alpha)$ are given by
\begin{equation}
\mathcal{W}^{0,s}_2(\mathcal{C}_\alpha)=
\sum_{\beta}
\sum_{k=0}^{s}\binom{s}{k}
\underline{u}_1^{\alpha^{\otimes^{s-k}}}\!\!\!\!\odot\underline{u}_2^{\alpha^{\otimes^{k}}}
I^{0,0,k,s-k}_2(\mathcal{I}_{\alpha\beta})
+\sum_{\underline{x}\in\mathcal{K}_{\alpha}}\mathcal{D}^{0,s}(\underline{x})
\end{equation}
which are translation invariant and in which
\begin{equation}
I^{0,0,k,\ell}_2(\mathcal{I}_{\alpha\beta})\equiv
(-1)^k
(v_1^{\alpha})^{k}(v_2^{\alpha})^{\ell}
\int\limits_{\theta=\theta^{-}_\beta}^{\theta^{+}_\beta}
\frac{\kappa(\theta)}{\left[A_\alpha(\theta)^2+B_\alpha(\theta)^2\right]^{\frac{k+\ell-1}{2}}}
\mathrm{d}\theta.
\end{equation}
The corresponding contributions from the singular points in $\mathcal{I}_{\alpha}$ are 
\begin{equation}
\mathcal{D}^{0,s}(\underline{x})=
\sum_{k=0}^s\binom{s}{k}\,[\underline{n}^{-}(\underline{x})]^{\otimes{s-k}}\odot[\underline{t}^{-}(\underline{x})]^{\otimes{k}}
\mathcal{J}^{s-k,k}(\underline{x})
\;\;\;\;\forall\;\underline{x}\in\mathcal{K}_{\alpha}.
\end{equation}

\section{Validation, computation and visualization}\label{sec:comp}
 
\subsection{Validation of the cell boundary representation}\label{sec:validation}
 
All the formulas of Section~\ref{sec:mt} rely on the representation of the cell boundary by the contact function $\chi$.
We therefore verified this representation before evaluating any Minkowski tensor.
 
First, we verified the closed-form contact function of Eq.~\eqref{eq:contact} against a direct numerical search for the first contact between two growing ellipses.
For this purpose, we drew sites $\underline{x}_\alpha,\underline{x}_\gamma\in\mathbb{R}^2$, SPD matrices $\mathbf{Z}_\alpha,\mathbf{Z}_\gamma$, and directions $\underline{s}\in S^1$ at random.
For each of the 1115 configurations in which the ray $t\mapsto\varphi_\alpha(\underline{s},t)$ meets the bisector, we located the first sign change of $d_\gamma^2-d_\alpha^2$ along the ray by dense sampling, and compared its position with $\chi_{\alpha\gamma}(\underline{s})$.
In all cases, both values agreed to within the sampling resolution, i.e., to a relative accuracy of $10^{-3}$.
 
Second, we verified that the cell boundaries resolved by the contact function coincide with those of digitized anisotropic Voronoi tessellations, obtained by assigning every pixel of a regular grid to the site $\underline{x}_\alpha$ that minimizes $d_\alpha$ at the pixel center, in the spirit of the voxel-based construction of~\cite{teferra2015tessellation}.
The cell boundaries obtained from the contact function, superimposed on the digitized cells, coincide with the interfaces between pixels assigned to different sites.
This confirms both the identification of the neighbors of each cell and the angular ranges over which each neighbor determines the cell boundary.
 
Finally, we verified the area formula of Eq.~\eqref{eq:052} on a non-convex test cell with 17 neighbors.
The area obtained from $\mathcal{W}_0^{0,0}(\mathcal{C}_\alpha)=I_0^{0,0}(\mathcal{C}_\alpha)$, evaluated with $2\times10^5$ equally spaced values of $\theta$, agrees with the area of the polygon formed by the corresponding boundary points to a relative accuracy of $2\times10^{-6}$.
 
\subsection{Computation and visualization of Minkowski tensors}
 
The number of independent Cartesian components of a symmetric tensor of order $r$ defined in $\mathbb{R}^d$ is given by~\cite{Comon2006}
\begin{equation}
n_c=\binom{r+d-1}{r},
\end{equation}
so that the Minkowski tensor $\mathcal{W}^{r,s}_{\nu}(\mathcal{C}_\alpha)$ of a Voronoi cell $\mathcal{C}_{\alpha}\subset\mathbb{R}^2$, which is of order $r+s$, possesses $r+s+1$ independent components.
We compute these components for a series of Minkowski tensors of the model Voronoi cell used throughout the paper.

In order to visualize these tensors, we define the polar glyph of a symmetric tensor $\mathcal{A}$ of order $r$ as the curve
\begin{equation}
\mathcal{P}(\mathcal{A}):=\left\{\mathcal{A}[\underline{x}^{\otimes r}]\;\underline{x}:\;\underline{x}\in S^1\right\},
\end{equation}
where $\mathcal{A}[\underline{x}^{\otimes r}]:=\underline{x}\cdot[\cdots[\underline{x}\cdot\mathcal{A}]\cdots]$ denotes the full contraction of $\mathcal{A}$ with $r$ copies of $\underline{x}$.
The polar glyphs of the computed Minkowski tensors of the centered model Voronoi cell are drawn in Figs.~\ref{fig.3100} to~\ref{fig.3130} for the 0th, 1st and 2nd kind, with orders up to nine.

\begin{figure}
\centering
\subfloat[Nucleation point, cell boundary, mass center and even-ordered Minkowski tensors of 0-th kind.]{\includegraphics[clip,scale=.6]{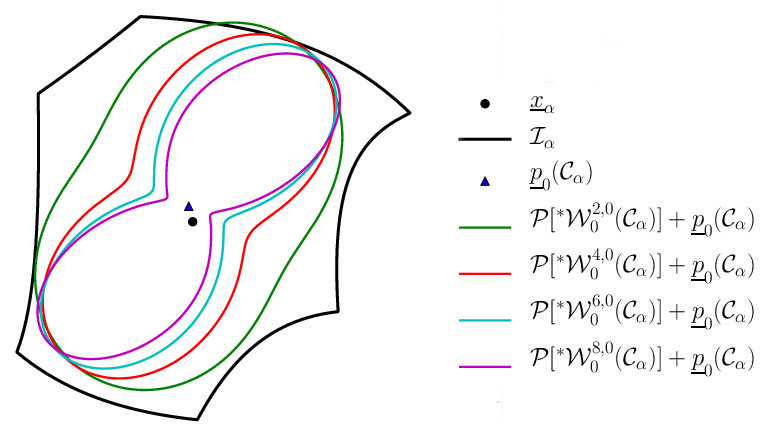}}\hspace{.5cm}
\subfloat[Nucleation point, cell boundary, mass center and odd-ordered Minkowski tensors of 0-th kind.]{\includegraphics[clip,scale=.6]{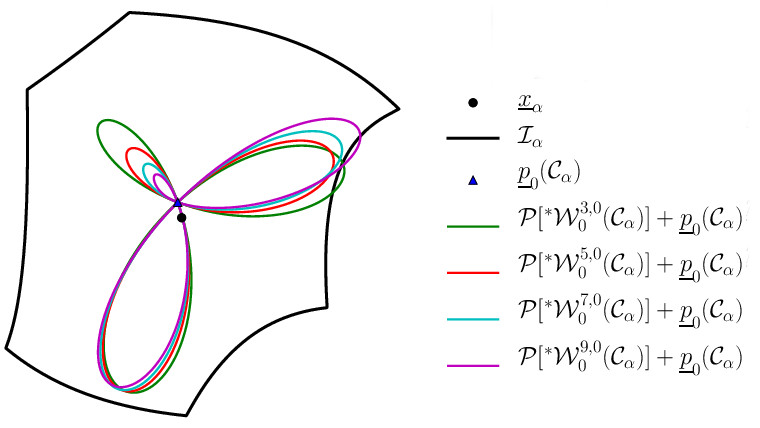}}
\caption{Scaled polar glyphs of normalized Minkowski tensors of 0-th kind for a planar cell $\mathcal{C}_\alpha$ radially convex at $\underline{x}_{\alpha}$.}
\label{fig.3100}
\end{figure}
	
\begin{figure}
\centering
\subfloat[Nucleation point, cell boundary, mass center and even-ordered normal Minkowski tensors of 1-st kind.]{\includegraphics[clip,scale=.6]{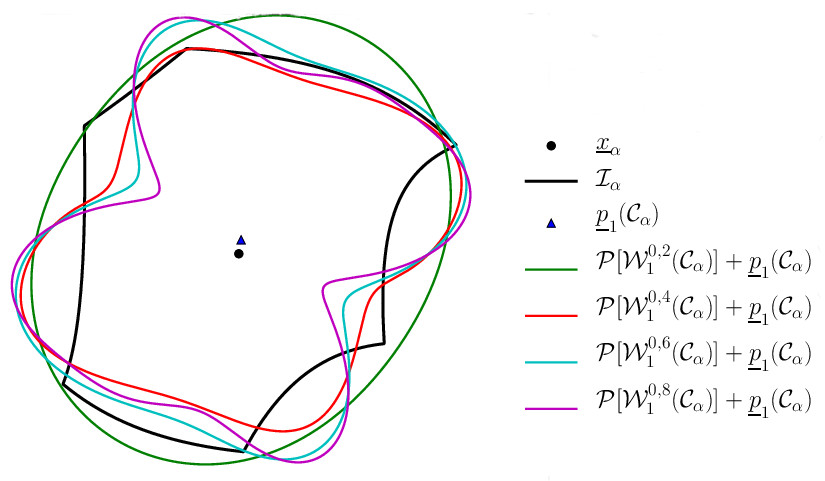}}\hspace{.5cm}
\subfloat[Nucleation point, cell boundary, mass center and odd-ordered normal Minkowski tensors of 1-st kind.]{\includegraphics[clip,scale=.6]{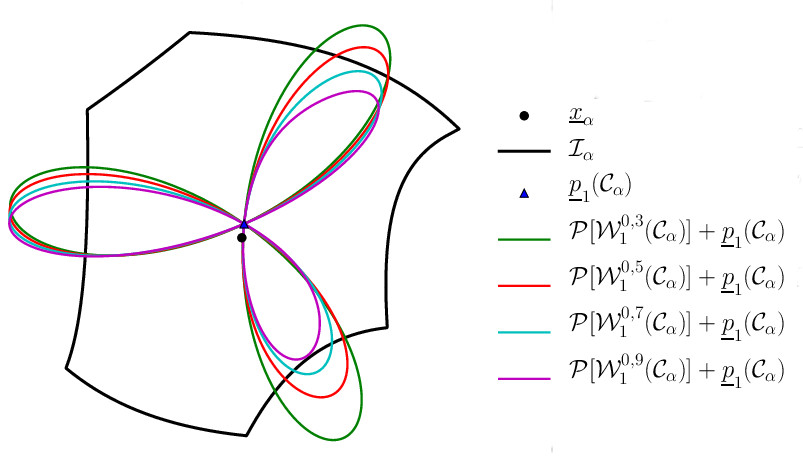}}
\caption{Scaled polar glyphs of normal Minkowski tensors of 1-st kind for a planar cell $\mathcal{C}_\alpha$ radially convex at $\underline{x}_{\alpha}$.}
\label{fig.3110}
\end{figure}
	
\begin{figure}
\centering
\subfloat[Nucleation point, cell boundary, mass center and even-ordered Minkowski tensors of 1-st kind.]{\includegraphics[clip,scale=.6]{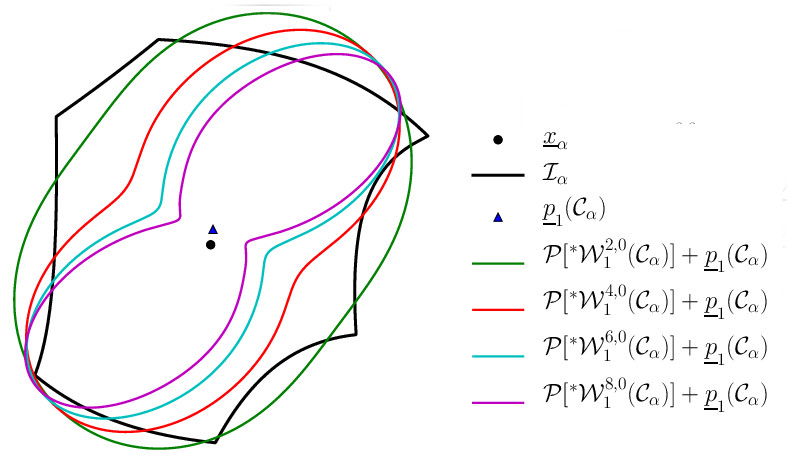}}\hspace{.5cm}
\subfloat[Nucleation point, cell boundary, mass center and odd-ordered Minkowski tensors of 1-st kind.]{\includegraphics[clip,scale=.6]{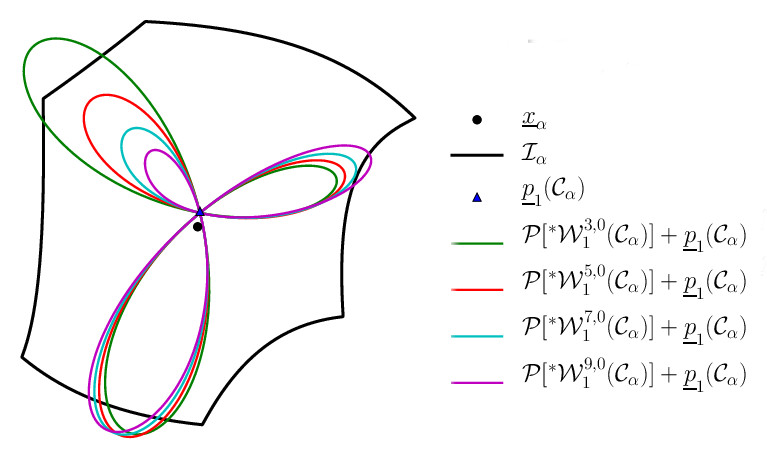}}
\caption{Scaled polar glyphs of normalized Minkowski tensors of 1-st kind for a planar cell $\mathcal{C}_\alpha$ radially convex at $\underline{x}_{\alpha}$.}
\label{fig.3120}
\end{figure}
	
\begin{figure}
\centering
\subfloat[Nucleation point, cell boundary, mass center and even-ordered Minkowski tensors of 2-nd kind.]{\includegraphics[clip,scale=.6]{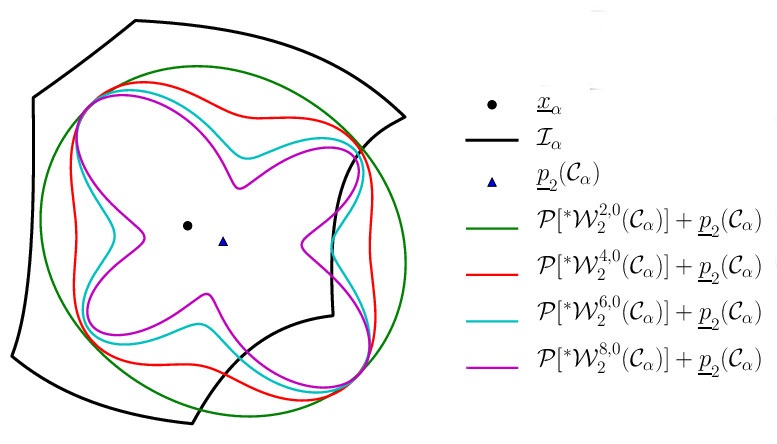}}\hspace{.5cm}
\subfloat[Nucleation point, cell boundary, mass center and odd-ordered Minkowski tensors of 2-nd kind.]{\includegraphics[clip,scale=.6]{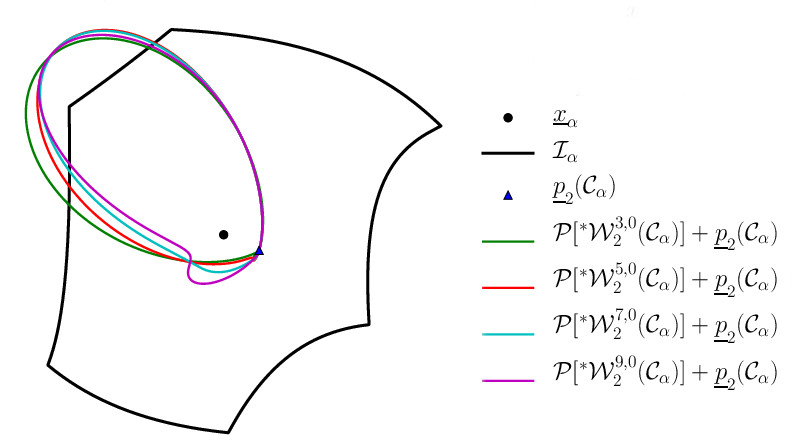}}
\caption{Scaled polar glyphs of normalized Minkowski tensors of 2-nd kind for a planar cell $\mathcal{C}_\alpha$ radially convex at $\underline{x}_{\alpha}$.}
\label{fig.3130}
\end{figure}
	
\section{Conclusion}\label{conclusion}

In this work, we derived semi-analytical formulas for Minkowski tensors of arbitrary order of the 0th, 1st and 2nd kind for planar anisotropic Voronoi cells that are simply connected and star convex at their nucleation point. 
The derivation relies on a diffeomorphic transformation under which every visible segment of the cell boundary is described by a contact function of a single angular variable. 
In terms of this function and its first two derivatives, we expressed the normal and tangent vectors, the surface and length elements, and the curvature of the cell boundary in closed form.
Building on these descriptors, each Minkowski tensor was written as a finite sum of symmetric tensor products of the site position and of the principal directions of the site matrix, weighted by scalar one-dimensional integrals of the contact function. 
The contributions of the common points of the cell boundary to the tensors of the 2nd kind were obtained in closed form through a parallel body construction.

A practical consequence of this formulation is that the tensor structure is separated from the geometry of the cell. 
A single set of scalar integrals, which can be approximated by standard quadrature to arbitrary accuracy, serves all tensor orders, and no discretization of the cell into polygons or voxels is needed. 
The formulas were illustrated on a model cell by means of polar glyphs of tensors up to order nine.

The present results are subject to several limitations, which also point to directions for future work. 
First, anisotropic Voronoi cells are not star convex in general, and may even be disconnected.
Extending the formulas to such cells requires a decomposition of the boundary into segments that are visible from suitable reference points. 
Second, the formulas assume that the contact function is known along the entire cell boundary, which requires the neighbors of the cell and the angular ranges of its boundary segments to be determined beforehand. 
Third, the extension of the approach to three dimensions, where cells are bounded by patches of quadric surfaces and Minkowski tensors of the 3rd kind arise, remains to be carried out. 
Finally, since the approach only relies on the existence of a contact function, we expect it to apply to other tessellations with curved cell boundaries, such as generalized balanced power diagrams, and to enable accurate morphological statistics of anisotropic polycrystalline microstructures.

\begin{appendices}

\section{Binomial theorem for the tensor power of a sum of two vectors}\label{Appendix}
The tensor power of a sum of two vectors $\underline{a}$ and $\underline{b}$ is given by
\begin{equation}
(\underline{a}+\underline{b})^{\otimes^r}=
\sum_{k=0}^{r}\binom{r}{k}\;\underline{a}^{\otimes^{r-k}}\!\!\odot\underline{b}^{\otimes^k}
\end{equation}
where $\odot$ denotes the symmetric tensor product.

\end{appendices}

\bibliography{sn-bibliography.bib}

\end{document}